\documentclass[11pt]{amsart}
\usepackage{amssymb,amsmath,amsthm}
\usepackage{bm}

\input{amssym.def}
\voffset -24pt
\textheight 49\baselineskip
\parskip 1.25pt
\textwidth 16.875cm
\hoffset -48pt

\newcommand{\qfor}{~\text{for}~}
\newcommand{\qand}{~\text{and}~}

\newcommand{\bt}{\begin{Theorem}}
\newcommand{\et}{\end{Theorem}}
\newcommand{\bi}{\begin{itemize}}
\newcommand{\ei}{\end{itemize}}
\newcommand{\bea}{\begin{eqnarray}}
\newcommand{\eea}{\end{eqnarray}}

\theoremstyle{plain}
\newtheorem{Theorem}{\sc Theorem}
\newtheorem{Lemma}{\sc Lemma}
\newtheorem{Proposition}{\sc Proposition}
\newtheorem{Corollary}{\sc Corollary}

\theoremstyle{definition}

\theoremstyle{remark}
\newtheorem{Remark}{\sc Remark}

\def\u{\mbox{{$\mathbf{u}$}}}%
\def\v{\mbox{{$\mathbf{v}$}}}%
\def\z{\mbox{{$\mathbf{z}$}}}%
\def\w{\mbox{{$\mathbf{w}$}}}%

\newcommand{\be}{\begin{equation}}
\newcommand{\ee}{\end{equation}}

\def\ops{{operators~}}%
\def\hs{{Hilbert space~}}%
\def\hsp{Hilbert space}%

\newcommand{\rk}{reproducing kernel }%

\newcommand{\bb}[1]{\mathbb #1}
\newcommand{\cl}[1]{\mathcal #1}

\def\C{\mbox{${\mathbb C}$}}
\def\B{\mbox{${\mathbb B}$}}
\def\D{\mbox{${\mathbb D}$}}

\def\N{\mbox{${\mathbb N}$}}

\newcommand{\alg}[1]{\mbox{${\mathcal A}(#1)$}}%
\def\v{\mbox{{\boldmath $v$}}}%
\newcommand{\kf}[1]{{K(\cdot,#1)}}%

\newcommand{\inner}[2]{\big \langle #1,#2\big \rangle }%

\newcommand{\cla}{\mathcal{A}}

\newcommand{\cle}{\mathcal{E}}

\newcommand{\clh}{\mathcal{H}}

\newcommand{\cll}{\mathcal{L}}
\newcommand{\clm}{\mathcal{M}}
\newcommand{\cln}{\mathcal{N}}
\newcommand{\clo}{\mathcal{O}}

\newcommand{\clr}{\mathcal{R}}
\newcommand{\cls}{\mathcal{S}}
\newcommand{\clt}{\mathcal{T}}

\newcommand{\raro}{\rightarrow}

\def \qed {\hfill \vrule height6pt width 6pt depth 0pt}
\def\textmatrix#1&#2\\#3&#4\\{\bigl({#1 \atop #3}\ {#2 \atop #4}\bigr)}
\def\dispmatrix#1&#2\\#3&#4\\{\left({#1 \atop #3}\ {#2 \atop #4}\right)}

\newcommand{\ben}{\begin{eqnarray*}}
\newcommand{\een}{\end{eqnarray*}}

\newcommand{\NI}{\noindent}

\def\5{{5\superprime}}



\begin{document}
\title[Contractive Hilbert modules and their dilations]{Contractive Hilbert modules and their dilations}

\author[R. G. Douglas]{Ronald G. Douglas}
\address[R. G. Douglas and J. Sarkar]{Texas A\&M University, College Station, Texas 77843}
\email[R. G.  Douglas]{rgd@math.tamu.edu} \email[J.  Sarkar]{jsarkar@math.tamu.edu}
\author[G. Misra]{Gadadhar Misra}
\address[G. Misra]{Indian Institute of Science
Bangalore 560 012}
\email[G. Misra]{gm@math.iisc.ernet.in}
\author[J. Sarkar]{Jaydeb Sarkar}

\address[Present Address of J. Sarkar]{Department of Mathematics, The University of Texas at San Antonio, San Antonio, TX 78249, USA}
\email{jaydeb.sarkar@utsa.edu}

\thanks{The work of RGD and JS was partially supported by a grant from the National Science Foundation, US, while that of GM was supported by Department of Science and Technology, India. This work was carried out during research visits to Texas A\&M and IISc., Bangalore.}
\subjclass[2000]{47A13, 47A20, 46E20, 46E22, 46M20, 47B32}
\keywords{Hilbert module, resolution, quasi-free, hereditary functional calculus, dilation, curvature, extremal property}

\begin{abstract}
In this note, we show that a quasi-free Hilbert module $\mathcal R$ defined over the polydisk algebra with kernel function $k(\z, \w)$ admits a unique minimal dilation (actually an isometric co-extension) to the Hardy module over the polydisk if and only if $S^{-1}(\z, \w) k(\z, \w)$ is a positive kernel function, where $S(\z, \w)$ is the Szeg\"{o} kernel for the polydisk. Moreover, we establish the equivalence of such a factorization of the kernel function and a positivity condition, defined using the hereditary functional calculus, which was introduced earlier by Athavale \cite{Ath} and Ambrozie,  Englis and M\"{u}ller \cite{AEM}. An explicit realization of the dilation space is given along with the isometric embedding of the module $\mathcal R$ in it. The proof works for a wider class of Hilbert modules in which the Hardy module is replaced by more general quasi-free Hilbert modules such as the classical spaces on the polydisk or the unit ball in $\mathbb{C}^m$. Some consequences of this more general result are then explored in the case of several natural function algebras.
\end{abstract}
\maketitle

\section*{Introduction}
One of the most far-reaching results in operator theory is the fact that every contraction operator has an essentially unique minimal unitary dilation and a closely related isometric co-extension on which the model theory of Sz.-Nagy and Foias \cite{Na-Fo} is based. This model provides not only a theoretical understanding of the structure of contractions but provides a useful and effective method for calculation.

A key reason this model theory is so incisive is the relatively simple structure of isometries due to von Neumann \cite{vN} .  In particular, every isometry is the direct sum of a unitary and a unilateral shift operator defined on a vector-valued Hardy space.  And, if one makes a modest assumption about the behavior of the powers of the adjoint of the contraction, then the unitary is absent and the isometry involved in the model  is the vector-valued unilateral shift defined on a vector-valued Hardy space on the unit disk.

If one attempts to extend this theory to commuting $m$-tuples of contractions on a Hilbert space, then one quickly runs into trouble. This is particularly true if $m > 2$ in which case the example of Parrott \cite{P} shows that a unitary dilation and hence an isometric co-extension need not exist.  For the $m = 2$ case, Ando's Theorem \cite{A} seems to hold out hope for a model theory since a pair of commuting contractions is known to have a unitary dilation and hence an isometric co-extension. However, such dilations are not necessarily unique and, more critically, the structure of the pair of commuting isometries is not simple.  In particular, the dilation space need not be related to the Hardy space on the bi-disk (see \cite{BDF}).

In this note, we study the question of which commuting pairs (and $m$-tuples) of contractions have an isometric co-extension to the forward shift operators on the Hardy space for the bi-disk (or polydisk). We  take up the question  for commuting contractions on a reproducing kernel Hilbert space which are defined by multiplication by the coordinate functions.

We approach the issue in greater generality - namely, in the context of Hilbert modules over the algebra of polynomials $\mathbb{C}[\z]$ in $m$ variables. Our main tool is to establish a close relationship between the kernel functions for the Hilbert modules in an exact sequence using localization. In particular, using this relation we seek to determine which quasi-free Hilbert modules can be obtained as a quotient module of a fixed Hilbert module of the form $\clr \otimes \cle$ for a "model" quasi-free Hilbert module $\clr$ of multiplicity one and coefficient Hilbert space $\cle$.  Our results for $\clr$ the Hardy module hold not just for the case $m = 2$ but for all $m$.  Of course, the conditions we impose are more restrictive than simply the assumption that the coordinate multipliers are contractive. Our characterization involves the relationship between the two kernel functions and provides an explicit construction of the dilation space using a factorization of the kernel function and a ''reduced'' tensor product as the main tools.

Our main result relates the existence of an $\clr \otimes \cle$ isometric co-extension to the positivity of the kernel function into which the coordinate operators are substituted as well as a factorization criteria for the kernel function itself. The equivalence of the first two conditions was established earlier by Athavale \cite{Ath} (see also \cite{AE}). However, our proof is quite different from his. A key step in our approach involves the hereditary functional calculus of Agler \cite{Agl} which has been an effective tool in constructing analytic models (cf. \cite{AEM}).

We begin by recalling the notion of a quasi-free Hilbert module which is a contractive reproducing kernel Hilbert space. Our main result for the vector-valued Hardy module is Theorem \ref{cor1} which determines when a large class of contractive quasi-free Hilbert modules over the polydisk algebra $\mathcal A(\mathbb D^m)$ admits a dilation to the $\mathcal E$ - valued Hardy module $H^2(\mathbb D^m)\otimes \mathcal E$ for $m \geq 1$ and some Hilbert space $\mathcal E$.  We provide an example showing that a contractive quasi-free Hilbert module over the bi-disk algebra $\mathcal A(\mathbb D^2)$ need not admit an isometric co-extension to the Hardy module $H^2(\mathbb D^2)$. We also consider corollaries which provide analogous results for a class of Hilbert modules which includes the Bergman module over the unit ball or polydisk. In the next section, we consider the existence of spherical Drury-Arveson shift co-extensions for a class of row contractive Hilbert modules over the ball algebra. Finally, we obtain a curvature inequality for quotient modules, generalizing an earlier result for contractive Hilbert modules over the disk algebra.

The authors thank Scott McCollough for his comments and suggestions on an earlier draft of this paper.

\section{Preliminaries}

Let $\Omega \subseteq \mathbb C^m$ be a bounded, connected open set.  Fix an inner product on the algebra $\alg{\Omega}$, the completion in the supremum norm on $\Omega$ of the functions holomorphic on a neighborhood of the closure of $\Omega$. The completion of $\alg{\Omega}$ with respect to this inner product is a Hilbert space which we call
$\cl{M}$.  It is natural to assume that the module action
$\alg{\Omega} \times \alg{\Omega} \to \alg{\Omega}$ extends
continuously to $\alg{\Omega} \times \cl{M} \to \cl{M}$. Thus
a {\em Hilbert module} $\mathcal{M}$ over $\mathcal{A}(\Omega)$ is a Hilbert
space with a multiplication $\mathcal{A}(\Omega) \times \mathcal{M} \to
\mathcal{M}$ making $\mathcal{M}$ into a unital module over
$\mathcal{A}(\Omega)$ and such that multiplication is continuous.
Every cyclic or singly-generated bounded Hilbert module over $\mathcal A(\Omega)$ is obtained as a Hilbert space completion of $\mathcal A(\Omega)$.
Using the closed graph theorem one can show the existence of a
constant $\alpha$ such that
$$
\| f h\|_\mathcal{M} \leq \alpha \|f\|_{\mathcal{A}(\Omega)}
\|h\|_\mathcal{M},\, f\in \mathcal A(\Omega),\, h \in \mathcal M.
$$
One says that $\mathcal{M}$ is a {\it contractive} Hilbert module if $\alpha =
1$.

We assume that the module $\mathcal M$ is {\em quasi-free} (cf. \cite{eqhm}) of
multiplicity $n$ for $n \in \mathbb N$.
In particular, we assume that $\mathcal M$ is the completion of the algebraic tensor
product  $\mathcal A(\Omega) \otimes  \ell_n^2$ relative to an inner product so that
\begin{enumerate}
\item  multiplication by functions in $\mathcal A(\Omega)$ define bounded operators on $\mathcal M$,
\item the evaluation operators $e_w : \mathcal M \to \ell_n^2$ are locally uniformly
bounded on $\Omega$, and
\item  a sequence $\{f_k\} \subseteq \mathcal A(\Omega) \otimes \ell_n^2$ which
is Cauchy in the norm converges to $0$ if and only if $e_w(f_k)$ converges to $0$ for $w
\in \Omega$.
\end{enumerate}
As pointed out in \cite{eqhm}, using the identification of $\mathcal{M}$ with the completion of $\mathcal{A}(\Omega)$, $\clm$ can be realized as a space of
holomorphic functions on $\Omega$ which forms a kernel Hilbert space. In other words, ${\mathcal M}$ admits a
reproducing kernel $K:\Omega\times \Omega \to \C$ which is holomorphic
in the first variable and anti-holomorphic in the second one. It also has
the reproducing property:
$$
\langle h, K(\cdot, \w) \rangle = h(\w),~~h\in {\mathcal M},~\w\in \Omega.
$$

In some instances, such as the Drury-Arveson space, this definition does not apply. In such cases we define $\clm$ as the completion of the polynomial algebra $\mathbb{C}[\z]$ relative to an inner product on it assuming that each $p(\z)$ in $\mathbb{C}[\z]$ defines a bounded operator on $\clm$ but there is no uniform bound. Hence, in this case $\clm$ is a Hilbert module over $\mathbb{C}[\z]$.

Classical examples of contractive quasi-free Hilbert modules are:
\begin{enumerate}
\item[\rm (i)] the Hardy module $H^2(\D^m)$ (over the polydisk algebra
$\mathcal{A}(\D^m)$) which is the closure of the polynomials,
$\C[\z]$, in $L^2(\partial D^m)$ and
\item[\rm (ii)] the Bergman module $L_a^2(\Omega)$ (over the algebra
$\mathcal{A}(\Omega)$) which is the closure of $\mathcal{A}(\Omega)$
in $L^2(\Omega)$ with volume measure on $\Omega$.
\end{enumerate}

Let ${\mathcal L}(l^2_n)$ be the $C^*$-algebra of all bounded linear transformations on the Hilbert space $l^2_n$ of dimension $n$ for some $n \in \N$. We want to recall the notion of an $\cll(l^2_n)$-valued kernel function. Let $\Omega\subset \bb{C}^m$ be a bounded, connected open set. A function
$K:\Omega\times\Omega \to {\mathcal{L}}(l^2_n)$, holomorphic in the first variable and anti-holomorphic in the second one, satisfying
\begin{equation} \label{existence reprod}
\sum_{i,j=1}^p \inner{K(\w^{(i)},\w^{(j)})\zeta_j}{\zeta_i}_{l^2_n}
~\geq~0 ,~~\mbox{for~}\w^{(1)},\ldots,\w^{(p)}\in \Omega, ~~\zeta_1,\ldots,\zeta_p
\in l^2_n\mbox{~and}~ p \in  \mathbb{N},
\end{equation}
is said to be a {\em non negative definite (n.n.d.) kernel} on $\Omega$.
Given such an n.n.d. kernel $K$ on $\Omega$, it is easy to construct a \hs
$\mathcal{H}$ of functions on $\Omega$ taking values in $l^2_n$ with the
property that
\begin{equation} \label{reproducing property}
\inner{f(\w)}{\zeta}_{l^2_n} = \inner{f}{\kf{\w}\zeta}, \qfor \w \in
\Omega,~\zeta\in l^2_n, \qand f\in \mathcal{H}.
\end{equation}
The \hs $\mathcal{H}$ is simply the completion of the linear span $\clh^0$ of all functions of the form $\kf{\w}\zeta$, $\w \in \Omega$, $\zeta\in
l^2_n$. The inner product of two of the functions in $\clh^0$ is
defined by first setting
\begin{equation}\label{nndinner}
\inner{\kf{\w}\zeta}{\kf{\w^\prime}\eta} =
\inner{K(\w^\prime,\w)\zeta}{\eta}, \qfor \zeta,\eta\in l^2_n,
\qand \w,\w^\prime \in \Omega,
\end{equation}
and then extending to the linear span $\cl{H}^0$. This ensures the reproducing property (\ref{reproducing property}) of
$K$ on $\cl{H}^0$.
\begin{Remark} \label{nndkernel}
We point out that although the kernel $K$ is required merely to be {\em n.n.d.}, equation (\ref{nndinner}) defines a positive definite sesqui-linear form.
To see this, simply note that
$|\inner{f(\w)}{\zeta}| = |\inner{f}{\kf{\w}\zeta}|$
which is at most $\|f\|\inner{K(\w,\w)\zeta}{\zeta}^{1/2}$ by the
Cauchy - Schwarz inequality.
It follows that if $\|f\| = 0$ then $f(\w)=0$ for $\w \in \Omega$.
\end{Remark}
Conversely, let $\mathcal{H}$ be any Hilbert space of holomorphic functions on
$\Omega$ taking values in $l^2_n$.  Let $e_{\w} : \mathcal{H} \to
l^2_n$ be the evaluation functional defined by $e_{\w}(f) =
f(\w)$, $\w \in \Omega$, $f \in \mathcal{H}$.  If $e_{\w}$
is bounded for $\w \in \Omega$, then it admits a bounded
adjoint $e_{\w}^*: l^2_n \to \cl{H}$ such that $\inner{e_{\w}
f}{\zeta} = \inner{f}{e_{\w}^* \zeta}$ for all $f\in \cl{H}$ and
$\zeta\in l^2_n$.  A function $f$ in $\cl{H}$ is then orthogonal to
$e_{\w}^*(\cl{H})$ for all $\w \in \Omega$ if and only if $f=0$.  Thus the functions $f = \sum_{i=1}^p
e_{{\w}^{(i)}}^*(\zeta_i)$ with ${\w}^{(1)},
\ldots,{\w}^{(p)}\in \Omega,~\zeta_1,\ldots,\zeta_p \in l^2_n,\qand
p \in \mathbb{N},$ form a dense linear subset in $\cl{H}$.   Therefore, we have
$$
\|f\|^2 = \sum_{i,j=1}^p \inner{e_{{\w}^{(i)}}e_{{\w}^{(j)}}^*\zeta_j}
{\zeta_i},
$$
where $f=\sum_{i=1}^n e_{{\w}^{(i)}}^*(\zeta_i), ~{\w}^{(i)} \in \Omega,
~\zeta_i\in l^2_n$.
Since $\|f\|^2 > 0$, it follows that the kernel $K(\z,\w) = e_{\z} e_{\w}^*$
is non-negative definite as in (\ref{existence reprod}).  It is clear that
$K(\cdot,\w)\zeta \in \mathcal{H}$ for each $\w \in \Omega$ and $\zeta\in l^2_n$,
and that it has the reproducing property (\ref{reproducing property}).
\begin{Remark} \label{nonsingker}
If we assume that the evaluation functional $e_{\w}$ is surjective, then the
adjoint $e_{\w}^*$ is injective and it follows that
$\inner{K(\w,\w)\zeta}{\zeta} = \|e^*_{\w} \zeta \|^2 > 0$ for all non-zero vectors $\zeta\in l^2_n$.
\end{Remark}

For $1\leq i\leq m$, suppose that the \ops $M_i:{\mathcal H} \to
{\mathcal H}$ defined by $M_if(\w) = w_if(\w)$ for $f\in \mathcal H$ and $\w \in \Omega$, are bounded.  Then it is easy to verify that for each fixed $\w \in \Omega$, and $1\leq i \leq m$,
\begin{equation} \label{eigenspace}
M_i^* K(\cdot, \w)\eta = \bar{w}_i K(\cdot, \w)\eta \qfor \eta \in l^2_n.
\end{equation}

\begin{Remark}\label{eigenspace2}
As a consequence of (\ref{eigenspace}) we see that the vectors $\{ K(\cdot, \w^{(i)}) \eta_i\}_{i=1}^p$ for $\w^{(1)}, \ldots, \w^{(p)} \in \Omega$ and $\eta_i \in l^2_n,\, p>0,$ are linearly independent if the $\w^{(i)}$ are distinct or if each subset of  $\eta_i$ corresponding to equal $\w^{(i)}$ are linearly independent.
\end{Remark}

One may impose conditions on a kernel function $K:\Omega \times
\Omega \to {\mathcal L}(l^2_n)$ to ensure the boundedness of each of
the multiplication \ops $M_1, \ldots, M_m$ on the associated \rk \hsp.
Let $\{\varepsilon_1, \ldots , \varepsilon_n\}$ be an orthonormal  basis for $l^2_n$.  Let $\cl{H}^0$ be the linear span of the vectors $\{K(\cdot,\w)\varepsilon_1, \ldots , K(\cdot, \w)\varepsilon_n : \w = (w_1, \ldots , w_m) \in \Omega\}$ assuming $K$ satisfies the condition in Remark \ref{eigenspace2}. Clearly, the linear subspace
${\mathcal{H}}^{\circ} \subseteq {\mathcal{H}}$ is dense in the Hilbert space ${\mathcal{H}}$.   Define a map $T_\ell, 1 \leq \ell
\leq m,$ by the formula $T_\ell K(\cdot,\w) \varepsilon_j =
\bar{w}_\ell K(\cdot, \w) \varepsilon_j$ for $1\leq j \leq n$, and $\w \in \Omega$ which is well defined by the assumption above in Remark \ref{eigenspace2}.  The following well known lemma gives a criterion for the boundedness of the adjoint of the coordinate operators $T_\ell, 1\leq \ell \leq m$. We include a proof for completeness.

\begin{Lemma} \label{bdd}
The densely defined map $T_\ell:{\mathcal H}^0 \to {\mathcal H}^0
\subseteq \mathcal H$, $1 \leq \ell \leq m$, is bounded if
and only if for some positive constants $c_l$ and for all $k\in \N$
$$
\sum_{i,j=1}^k \inner{(c_l^2 -\w_\ell^{(j)}\bar{\w}_\ell^{(i)})
K(\w^{(j)},\w^{(i)})x_i}{x_j} \geq 0.
$$
for $x_1, \ldots, x_k \in l^2_n$ and $\w^{(1)}, \ldots,
\w^{(k)} \in \Omega$.  If the map $T_\ell$ is bounded, then it is
the adjoint of the multiplication operator $M_\ell :\mathcal H
\to \mathcal H$, $1 \leq \ell \leq m$ and $\|T_l\|$ is the smallest $c_l$ for which the positivity condition holds.
\end{Lemma}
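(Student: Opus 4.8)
The plan is to reduce the boundedness of $T_\ell$ to a one-line computation of $\|T_\ell f\|^2$ on the dense subspace $\mathcal H^0$, and then to identify the resulting bounded extension with $M_\ell^*$ by means of the reproducing property.

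First I would write a generic element of $\mathcal H^0$ as $f = \sum_{i=1}^k K(\cdot, w^{(i)}) x_i$ with $w^{(1)}, \ldots, w^{(k)} \in \Omega$ and $x_1, \ldots, x_k \in l^2_n$ --- this is all of $\mathcal H^0$ since $\{\varepsilon_1,\ldots,\varepsilon_n\}$ spans $l^2_n$ --- and note that the defining formula for $T_\ell$ (legitimate by the hypothesis of Remark \ref{eigenspace2}, which makes $T_\ell$ well defined on $\mathcal H^0$) gives, by linearity, $T_\ell f = \sum_{i=1}^k \bar w^{(i)}_\ell K(\cdot, w^{(i)}) x_i$. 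Expanding $\|f\|^2$ and $\|T_\ell f\|^2$ with (\ref{nndinner}) yields
\[
c_\ell^2\,\|f\|^2 - \|T_\ell f\|^2 = \sum_{i,j=1}^k \big\langle \big(c_\ell^2 - w^{(j)}_\ell\,\bar w^{(i)}_\ell\big)\, K(w^{(j)}, w^{(i)})\, x_i,\, x_j \big\rangle .
\]
Hence, for a fixed constant $c_\ell$, the displayed positivity condition (for every $k$ and all choices of $w^{(i)}$, $x_i$) holds \emph{if and only if} $\|T_\ell f\| \le c_\ell \|f\|$ for all $f \in \mathcal H^0$, and since $\mathcal H^0$ is dense in $\mathcal H$ this is in turn equivalent to $T_\ell$ admitting a unique bounded extension to $\mathcal H$ of norm at most $c_\ell$. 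The assertion that $\|T_\ell\|$ is the \emph{smallest} such $c_\ell$ is then immediate: the admissible constants form the interval $\big[\,\sup\{\|T_\ell f\|/\|f\| : 0 \ne f \in \mathcal H^0\},\,\infty\big)$, and by density this supremum is the operator norm of the extension, which therefore attains the minimum.

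It remains to identify the bounded extension, still written $T_\ell$, with $M_\ell^*$. Here I would argue through the adjoint: assuming $T_\ell$ bounded, $T_\ell^*$ is bounded on $\mathcal H$, and for $g \in \mathcal H$, $w \in \Omega$, $\eta \in l^2_n$ the reproducing property (\ref{reproducing property}) together with $T_\ell K(\cdot, w)\eta = \bar w_\ell K(\cdot, w)\eta$ gives
\[
\big\langle (T_\ell^* g)(w),\, \eta \big\rangle = \big\langle T_\ell^* g,\, K(\cdot, w)\eta \big\rangle = \big\langle g,\, \bar w_\ell K(\cdot, w)\eta \big\rangle = w_\ell \big\langle g(w),\, \eta \big\rangle .
\]
As $\eta$ is arbitrary, $(T_\ell^* g)(w) = w_\ell\, g(w)$ for all $w$, i.e. $T_\ell^* g = M_\ell g$; in particular $M_\ell$ maps $\mathcal H$ into itself and is bounded with $M_\ell = T_\ell^*$, whence $M_\ell^* = T_\ell$ on taking adjoints once more. (This last step is just (\ref{eigenspace}) read on the dense set $\mathcal H^0$, combined with boundedness of both operators.)

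I do not expect a serious obstacle: this is the standard completion argument for reproducing kernel Hilbert spaces. The only two places needing genuine care are bookkeeping in nature: keeping the two index families and the conjugations in (\ref{nndinner}) aligned so that the coefficient comes out precisely as $c_\ell^2 - w^{(j)}_\ell\,\bar w^{(i)}_\ell$ with the right sign, and remembering that boundedness of $M_\ell$ is a \emph{conclusion} --- extracted via the adjoint and the reproducing kernel --- rather than a standing hypothesis.
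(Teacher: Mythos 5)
Your proof is correct and follows essentially the same route as the paper's: the identity $c_\ell^2\|f\|^2 - \|T_\ell f\|^2 = \sum_{i,j}\big\langle (c_\ell^2 - w_\ell^{(j)}\bar w_\ell^{(i)})K(\w^{(j)},\w^{(i)})x_i, x_j\big\rangle$ on $\mathcal H^0$ is exactly the computation carried out there, and the smallest-constant claim follows by density just as in the paper. Your identification of the bounded extension with $M_\ell^*$ by computing $T_\ell^*$ pointwise is a slightly more careful packaging of the paper's appeal to the eigenvector relation (\ref{eigenspace}), since it derives the boundedness of $M_\ell$ instead of presupposing it.
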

\begin{proof} The proof in the forward direction amounts to a verification
of the positivity condition in the statement of the Lemma.  To verify this, fix $\ell$,
$1 \leq \ell \leq m$, and note
that if $T_\ell$ is bounded on $\cl{H}^0$, then we must have
\begin{eqnarray*}
\|T_\ell (\sum_{i=1}^k K(\cdot, \w^{(i)}) x_i)\|^2 &=&
\|\bar{w}_\ell^{(i)}\sum_{i=1}^k K(\cdot,\w^{(i)}) x_i \|^2 \\
&=& \inner{\bar{w}^{(i)}_\ell \sum_{i=1}^k K(\cdot, \w^{(i)}) x_i}
{\bar{w}_\ell^{(j)}\sum_{j=1}^k K(\cdot, \w^{(j)}) x_j}\\
&=&\sum_{i,j=1}^k \bar{w}_\ell^{(i)} w_\ell^{(j)}
\inner{K(\w^{(j)},\w^{(i)})x_i}{x_j}\\
&\leq& \|T_l\|^2 \inner{\sum_{i=1}^k K(\cdot, \w^{(i)}) x_i}{\sum_{j=1}^k K(\cdot,
\w^{(j)}) x_j}\\
&=& \|T_l\|^2 \sum_{i,j=1}^k \inner{K(\w^{(j)},\w^{(i)})x_i}{x_j},
\end{eqnarray*}
for all possible vectors $x_1, \ldots,
x_k \in l^2_n$, $\w^{(1)}, \ldots, \w^{(k)} \in \Omega$.  However,
combining the last two lines, we obtain the
positivity condition of the Lemma.  On the other hand, if the
positivity condition is satisfied for some positive constant $c_l$, then the preceding calculation shows
that $T_\ell$,  $1 \leq \ell \leq m$, is bounded on $\cl{H}^0 \subseteq
\cl{H}$.  Therefore, it defines a bounded linear operator on all of
$\cl{H}$ with $\|T_\ell\| \leq c_\ell$,  $1 \leq \ell \leq m$.

Recall that if the operator $M_\ell$, defined to be multiplication by the co-ordinate function
$z_\ell$, is bounded, then $K(\cdot, \w)x$ is an eigenvector with
eigenvalue $\bar{w_l}$ for the adjoint $M_\ell^*$ on $\cl{H}$.
This proves the last statement of the Lemma with the relation between $c_l$ and $\|T_l\|$ being straightforward.
\end{proof}
We abbreviate the positive definiteness condition of Lemma \ref{bdd},  namely,
$$\sum_{i,j=1}^k \inner{(c_l^2 - w_\ell^{(j)}\bar{w}_\ell^{(i)})
K(\w^{(j)},\w^{(i)})x_i}{x_j} \geq 0,\mbox{ for } x_1, \ldots, x_k
\in l^2_n\mbox{ and }\w^{(1)}, \ldots, \w^{(k)} \in \Omega$$
to saying that
$(c_l^2 -z_\ell\bar{\omega}_\ell) K(\z,\w)$
is positive definite for each $\ell$,  $1 \leq \ell \leq m$.

\begin{Remark}
A module action by $\mathbb{C}[\z]$ on the reproducing kernel Hilbert space $\clh$ with kernel function $k(\z, \w)$ is said to be {\it compatible} if $M^*_{z_i} k(\cdot, \w) = \bar{w_i} k(\cdot, \w)$ for $\w \in \Omega$ and $1 \leq i \leq m$. (Note that compatibility for $K$ implies the conclusion of Remark \ref{eigenspace2}). A kernel Hilbert space need not posses a compatible module structure but if it does, it is unique. Consider the following example. If $f : \mathbb{D} \raro \mathbb{C} \backslash\ \{0\}$ is a holomorphic function, then $k(z, w) = f(z) \overline{f(w)}$ is non-negative definite. However, the Hilbert space $\clh$ of functions $\{k(\cdot, w) : w \in  \mathbb{D}\}$ consists of scalar multiples of $f$ and hence $\clh$ is one dimensional. If $M_z$ is defined on $\clh$, then $M_z^* k(\cdot, w) = \bar{w} k(\cdot, w)$ for $w \in \mathbb{D}$ and hence $k(\cdot, w)$ is an eigenvector for the eigenvalue $\bar{w}$. Thus for distinct $w, w' \in \mathbb{D}$, the vectors $k(\cdot, w)$ and $k(\cdot, w')$ are linearly independent which contradicts the fact that $\clh$ is one dimensional. Thus no compatible module action can be defined on $\clh$.
\end{Remark}

\section{Co-extensions and kernel functions}
Let $\clr \subseteq \mbox{Hol}(\Omega, \mathbb{C})$ be a reproducing kernel Hilbert space with the scalar kernel function $k : \Omega \times \Omega \raro \mathbb{C}$. (Note that the containment of $\clr$ in $\mbox{Hol}(\Omega, \mathbb{C})$ determines the kernel function $k(\z, \w)$ and vice versa.) Let $\cle$ be a separable Hilbert space so that the Hilbert space tensor product $\clr \otimes \cle \subseteq \mbox{Hol}(\Omega, \cle)$ is a reproducing kernel Hilbert space with the kernel function $(k \otimes I_{\cle})(\bm{z}, \bm{w}) = k(\bm{z}, \bm{w}) I_{\cle} \in \cll(\cle)$.

Let $\clm$ be a quasi-free Hilbert module of multiplicity $n \,(1 \leq n < \infty)$ over $A(\Omega)$ for which the evaluation operator $e_{\w}$ is surjective for $\w \in \Omega$. We recall, as shown in the previous section, that the kernel function $K_{\clm}$ of $\clm$ is given by $$K_{\clm}(\bm{z}, \bm{w}) = e_{\bm{z}} e_{\bm{w}}^* : \Omega \times \Omega \raro \cll(l^2_n).$$

Now, let $\clm$ be a Hilbert module isomorphic to $(\clr \otimes \cle)/ \cls$ for some submodule $\cls$ of $\clr \otimes \cle$, or equivalently, $\clm$ has an isometric co-extension to $\clr \otimes \cle$. Consequently, we have the short exact sequence of Hilbert modules
$$0 \raro  \cls \stackrel{i} \raro \clr \otimes \cle \stackrel{\pi} \raro \clm \raro 0,$$ where the second map is the inclusion $i$ and the third map is the quotient map $\pi$ which is a co-isometry. For each $\bm{w}$ in $\Omega$, define the ideal $I_{\bm{w}} = \{ \varphi \in A(\Omega) : \varphi(\bm{w}) = 0 \}$ (or $\{ p(\bm{z}) \in \mathbb{C} [\bm{z}] : p(\bm{w}) = 0\}$). Also recall that $\clm/ \clm_{\bm{z}} \cong \clm \otimes_{A(\Omega)} \mathbb{C}_{\bm{z}} \cong \mathbb{C}_{\bm{z}} \otimes l^2_n \cong l^2_n$, where $\clm_{\bm{z}}$ is the closure of $I_{\bm{z}} \clm$ in $\clm$.

\begin{Theorem}\label{TH1}
Let $\clr \subseteq \mbox{Hol}(\Omega, \mathbb{C})$ be a reproducing kernel Hilbert module over $A(\Omega)$ (or over $\mathbb{C}[\bm{z}]$) with the scalar kernel function $k$ and $\clm$ be a quasi-free Hilbert module over $A(\Omega)$ (or over $\mathbb{C}[\bm{z}]$) of multiplicity $n$. Then $\clm$ has an isometric co-extension to $\clr \otimes \cle$ for the Hilbert space $\cle$, if and only if there is a holomorphic map $\pi_{\bm{z}} \in \clo(\Omega, \cll(\cle, l^2_n))$ such that $$K_{\clm}(\bm{z}, \bm{w}) = k(\bm{z}, \bm{w}) \pi_{\bm{z}} \pi_{\bm{w}}^*, \quad \quad \bm{z}, \bm{w} \in \Omega.$$
\end{Theorem}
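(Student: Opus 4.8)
The plan is to extract the factorization from the short exact sequence by localizing at each point $\bm{w} \in \Omega$, and conversely to build the co-extension directly from the factorization using the reproducing kernel Hilbert space attached to $k(\bm{z},\bm{w}) \, \pi_{\bm{z}} \pi_{\bm{w}}^*$. For the forward direction, suppose $\clm \cong (\clr \otimes \cle)/\cls$ with quotient map $\pi \colon \clr \otimes \cle \to \clm$ a co-isometry intertwining the module actions. Then the adjoint $\pi^* \colon \clm \to \clr \otimes \cle$ is an isometric module map, so it intertwines the adjoints of the coordinate multipliers; consequently $\pi^*$ carries the joint eigenvector $K_{\clm}(\cdot,\bm{w})\eta$ (eigenvalue $\bar{\bm{w}}$, by (\ref{eigenspace})) into the corresponding eigenspace of $(\clr\otimes\cle)^*$ at $\bar{\bm{w}}$, which is exactly $k(\cdot,\bm{w}) \otimes \cle$. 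This gives, for each $\bm{w}$, a bounded operator $\pi_{\bm{w}}^* \colon l^2_n \to \cle$ with $\pi^*\big(K_{\clm}(\cdot,\bm{w})\eta\big) = k(\cdot,\bm{w}) \otimes \pi_{\bm{w}}^*\eta$. Pairing two such vectors and using that $\pi^*$ is isometric yields
\[
\langle K_{\clm}(\bm{z},\bm{w})\eta, \zeta\rangle = \langle k(\bm{z},\bm{w}) \, \pi_{\bm{w}}^*\eta, \pi_{\bm{z}}^*\zeta\rangle = \langle k(\bm{z},\bm{w})\, \pi_{\bm{z}}\pi_{\bm{w}}^*\eta, \zeta\rangle,
\]
which is the asserted identity $K_{\clm}(\bm{z},\bm{w}) = k(\bm{z},\bm{w})\,\pi_{\bm{z}}\pi_{\bm{w}}^*$; holomorphic dependence of $\bm{z} \mapsto \pi_{\bm{z}}$ follows because both $K_{\clm}$ and $k$ are holomorphic in the first variable and anti-holomorphic in the second, together with surjectivity of $e_{\bm{w}}$ (Remark \ref{nonsingker}) which lets one solve for $\pi_{\bm{z}}$ locally.

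For the converse, assume the factorization holds. Since $k(\bm{z},\bm{w}) \, \pi_{\bm{z}}\pi_{\bm{w}}^* = K_{\clm}(\bm{z},\bm{w})$ is the reproducing kernel of $\clm$, I would realize $\clm$ concretely inside $\clr \otimes \cle$ via the map $V \colon \clm \to \clr \otimes \cle$ determined on the dense set of kernel vectors by $V\big(K_{\clm}(\cdot,\bm{w})\eta\big) = k(\cdot,\bm{w}) \otimes \pi_{\bm{w}}^*\eta$. The computation
\[
\langle V(K_{\clm}(\cdot,\bm{w})\eta), V(K_{\clm}(\cdot,\bm{w}')\eta')\rangle = k(\bm{w}',\bm{w})\,\langle \pi_{\bm{w}}^*\eta, \pi_{\bm{w}'}^*\eta'\rangle = \langle K_{\clm}(\bm{w}',\bm{w})\eta, \eta'\rangle = \langle K_{\clm}(\cdot,\bm{w})\eta, K_{\clm}(\cdot,\bm{w}')\eta'\rangle
\]
shows $V$ is a well-defined isometry (linear independence of the kernel vectors, Remark \ref{eigenspace2}, guarantees well-definedness). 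Next I check $V$ is a module map: $M_{z_i}^*$ on $\clr \otimes \cle$ acts on $k(\cdot,\bm{w})\otimes\xi$ by $\bar{w}_i\, k(\cdot,\bm{w})\otimes\xi$, matching the action of $M_{z_i}^*$ on $K_{\clm}(\cdot,\bm{w})\eta$, so $V M_{z_i}^* = (M_{z_i}^* \otimes I)V$, i.e. $V$ intertwines the coextension structure. Then $VV^*$ is the orthogonal projection of $\clr\otimes\cle$ onto the range of $V$, and setting $\cls = (\operatorname{ran} V)^\perp$ realizes $\clm \cong (\clr\otimes\cle)/\cls$ as a quotient module, equivalently $\clr\otimes\cle$ as an isometric co-extension of $\clm$. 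One also needs the coefficient space $\cle$ to be large enough that $\pi_{\bm{w}}^*$ is injective (or at least that $\overline{\operatorname{span}}\{k(\cdot,\bm{w})\otimes\pi_{\bm{w}}^*\eta\}$ is reducing-complemented), which can be arranged by replacing $\cle$ with $\overline{\bigvee_{\bm{w}}\operatorname{ran}\pi_{\bm{w}}^*}$ without loss of generality.

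The main obstacle is the regularity and globalization of $\bm{w}\mapsto\pi_{\bm{w}}$ in the forward direction: producing, from the abstract isometric module map $\pi^*$, a genuinely \emph{holomorphic} $\clo(\Omega,\cll(\cle,l^2_n))$-valued symbol rather than merely a pointwise family. The pointwise operators $\pi_{\bm{w}}^*$ are forced by the eigenvector calculation, but to see $\bm{z}\mapsto\pi_{\bm{z}}$ is holomorphic I would use that $K_{\clm}(\bm{z},\bm{w})$ and $k(\bm{z},\bm{w})$ are each holomorphic in $\bm{z}$ and anti-holomorphic in $\bm{w}$, fix a base point $\bm{w}_0$ where $\pi_{\bm{w}_0}^*$ has full range (using surjectivity of $e_{\bm{w}_0}$, so $K_{\clm}(\bm{w}_0,\bm{w}_0) > 0$ by Remark \ref{nonsingker}), and invert locally: $\pi_{\bm{z}}\pi_{\bm{w}_0}^* = K_{\clm}(\bm{z},\bm{w}_0)/k(\bm{z},\bm{w}_0)$ is holomorphic in $\bm{z}$ near $\bm{w}_0$ with $k(\bm{z},\bm{w}_0)\neq 0$ there, and $\pi_{\bm{w}_0}^*$ being a fixed surjection onto $l^2_n$ admits a bounded right inverse, giving $\pi_{\bm{z}}$ holomorphic locally; analytic continuation plus the identity principle then patch these together on all of $\Omega$. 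A secondary point to handle with care is the consistency of the two module structures under the isomorphism $\clm/\clm_{\bm{z}}\cong l^2_n$, which is what makes the identification of $\pi_{\bm{w}}^*$ canonical.
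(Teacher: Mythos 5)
Your proof is correct and is essentially the paper's argument: your joint-eigenvector formulation of the forward direction is just the dual picture of the paper's localization diagram (the vectors $K_{\clm}(\cdot,\bm{w})\eta$ and $k(\cdot,\bm{w})\otimes\xi$ are the ranges of $Q_{\bm{w}}^*$ and $P_{\bm{w}}^*$, and your inner-product computation is the identity $Q_{\bm{z}}\pi\pi^*Q_{\bm{w}}^* = \pi_{\bm{z}}P_{\bm{z}}P_{\bm{w}}^*\pi_{\bm{w}}^*$), and your map $V$ in the converse is exactly the paper's isometry $X$ defined by $XQ_{\bm{z}}^*\eta = P_{\bm{z}}^*\pi_{\bm{z}}^*\eta$. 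Your extra care with the holomorphy of $\bm{z}\mapsto\pi_{\bm{z}}$ goes beyond what the paper records but does not change the route.
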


\NI \textsf{Proof.} To prove the necessity part, we localize the exact sequence of Hilbert modules
$$0 \raro \cls \raro \clr \otimes \cle \raro \clm \raro 0,$$ at $\bm{z}$, and obtain the following diagram
\setlength{\unitlength}{3mm}
 \begin{center}
 \begin{picture}(40,16)(0,0)
\put(2,3){$ \cls/I_{\bm{z}} \cls$} \put(10,3){$ (\clr \otimes \cle)/I_{\bm{z}} (\clr \otimes \cle) $} \put(26,3){$ \clm/I_{\bm{z}} \clm$} \put(36,3){0}

 \put(6.6,2.1){$i_{\bm{z}}$} \put(23.6, 2.1){$\pi_{\bm{z}}$}

 \put(3,6.5){$N_{\bm{z}}$} \put(15,6.4){$P_{\bm{z}}$} \put(28,6.4){$Q_{\bm{z}}$}

 \put(-5, 9.5){0} \put(2,9.5){$\cls$}\put(12,9.5){$\clr \otimes \cle$} \put(27,9.5){$\clm$} \put(36,9.5){0}
 \put(5.6,10.5){$i$} \put(22.6, 10.5){$\pi$}

 \put(-4,10){ \vector(1,0){5}} \put(5.5,3.5){ \vector(1,0){3}} \put(4.5,10){ \vector(1,0){5}} \put(21.5,10){ \vector(1,0){4}} \put(22,3.5){ \vector(1,0){3}} \put(31,3.5){ \vector(1,0){4}}  \put(31,10){ \vector(1,0){4}}
 \put(2.4,9.2){ \vector(0,-1){5}} \put(14,9.2){ \vector(0,-1){5}} \put(27,9.2){ \vector(0,-1){5}}

 \end{picture}
 \end{center}

\NI which is commutative with exact rows for all $\bm{w}$ in $\Omega$ (see \cite{DP}). Here $N, P, Q$ are the natural co-isometric or quotient module maps. If we identify $\clm/ I_{\bm{z}} \clm$ with $l^2_n$ and $(\clr \otimes \cle)/I_{\bm{z}} (\clr \otimes \cle)$ with $\cle$, then the kernel functions of $\clm$ and $\clr \otimes \cle$ are given by $Q_{\bm{z}} Q_{\bm{w}}^*$ and $P_{\bm{z}} P_{\bm{w}}^*$, respectively. Moreover, since $Q_{\bm{w}} \pi = \pi_{\bm{w}} P_{\bm{w}}$ for all $\bm{w} \in \Omega$, we have that $Q_{\bm{z}} \pi \pi^* Q_{\bm{w}} = \pi_{\bm{z}} P_{\bm{z}} P_{\bm{w}}^* \pi_{\bm{w}}^*$. Using the fact that $\pi \pi^* = I_{\clm}$ and $P_{\bm{z}} P_{\bm{w}}^* = k(\bm{z}, \bm{w}) \otimes I_{\cle}$, we infer that $$Q_{\bm{z}} Q_{\bm{w}}^* = k(\bm{z}, \bm{w}) \pi_{\bm{z}} \pi_{\bm{w}}^*, \quad \quad \bm{z}, \bm{w} \in \Omega.$$

Conversely, assume that for a given quasi-free Hilbert module $\clm$, the kernel function of $\clm$ has the factorization $$K_{\clm}(\bm{z}, \bm{w}) = k(\bm{z}, \bm{w}) \pi_{\bm{z}} \pi_{\bm{w}}^*, \quad \quad \bm{z}, \bm{w} \in \Omega,$$ for some function $\pi : \Omega \raro \cll(\cle, l^2_n)$. Note that if the function $\pi$ satisfies the above equality then it is holomorphic on $\Omega$. Now, we define a linear map $X : \clm \raro \clr \otimes \cle$ so that $$X Q_{\bm{z}}^* \eta = P_{\bm{z}}^* \pi_{\bm{z}}^* \eta, \quad \quad \eta \in l^2_n.$$  It then follows that

\begin{equation*}
\begin{split}
\langle X(Q^*_{\bm{w}} \eta), X (Q^*_{\bm{z}} \zeta) \rangle = & \langle P_{\bm{w}}^* \pi_{\bm{w}}^* \eta, P^*_{\bm{z}} \pi^*_{\bm{z}} \zeta \rangle = \langle \pi_{\bm{z}} P_{\bm{z}} P^*_{\bm{w}} \pi^*_{\bm{w}} \eta, \zeta \rangle  = \langle Q_{\bm{z}} Q^*_{\bm{w}} \eta, \zeta \rangle \\& = \langle Q^*_{\bm{w}} \eta, Q^*_{\bm{z}} \zeta \rangle,
\end{split}
\end{equation*}
for all $\eta, \zeta \in l^2_n$. Therefore, since $\{Q_{\bm{z}}^* \eta : \bm{z} \in \Omega, \eta \in l^2_n\}$ is a total set of $\clm$, then $X$ extends to a bounded isometric operator. Moreover, by the reproducing property of the kernel function, it follows that  $$M_{\varphi}^* X (Q_{\bm{z}}^* \eta) = M_{\varphi}^* P^*_{\bm{z}} (\pi^*_{\bm{z}} \eta) = \overline{\varphi(\bm{z})} P^*_{\bm{z}} \pi^*_{\bm{z}} \eta = \overline{\varphi(\bm{z})} X(Q^*_{\bm{z}} \eta) = XQ^*_{\bm{z}} (\overline{\varphi(\bm{z})} \eta) = X M_{\varphi}^* (Q_{\bm{z}}^* \eta),$$ for all $\varphi \in A(\Omega)$ and $\eta \in l^2_n$. Hence, $X^* \in \cll(\clr \otimes \cle, \clm)$ is a module map. \qed

As an application of the above theorem, we have the main result of this section

\begin{Theorem}\label{TH2}
Let $\clm$ be a quasi-free Hilbert module over $A(\Omega)$ (or over $\mathbb{C}[\bm{z}]$) of multiplicity $n \in \mathbb{N}$ and $\clr$ be a reproducing kernel Hilbert module over the same algebra. Let $k$ be the kernel function of $\clr$ and $K_{\clm}$ be that of $\clm$. Then $\clm$ has an isometric co-extension to $\clr \otimes \cle$ for some Hilbert space $\cle$ if and only if $$K_{\clm}(\bm{z}, \bm{w}) = k(\bm{z}, \bm{w}) \tilde{K}(\bm{z}, \bm{w}),$$ for some positive definite kernel $\tilde{K}$ over $\Omega$. Moreover, if $k^{-1}$ is defined, then the above conclusion is true if and only if $k^{-1} K_{\clm}$ is a positive definite kernel.
\end{Theorem}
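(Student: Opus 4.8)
The plan is to deduce Theorem~\ref{TH2} from Theorem~\ref{TH1} together with the classical correspondence between non-negative definite kernels and their Kolmogorov factorizations, which was in fact already recalled in Section~1. By Theorem~\ref{TH1}, $\clm$ admits an isometric co-extension to $\clr\otimes\cle$ for some Hilbert space $\cle$ if and only if there exist a Hilbert space $\cle$ and a holomorphic map $\pi_{\bm{z}}\in\clo(\Omega,\cll(\cle,l^2_n))$ with
$$K_{\clm}(\bm{z},\bm{w})=k(\bm{z},\bm{w})\,\pi_{\bm{z}}\pi_{\bm{w}}^{*},\qquad \bm{z},\bm{w}\in\Omega.$$
Thus it suffices to prove the following elementary fact: a map $\tilde K:\Omega\times\Omega\to\cll(l^2_n)$, holomorphic in the first and anti-holomorphic in the second variable, is a positive definite kernel in the sense of (\ref{existence reprod}) if and only if $\tilde K(\bm{z},\bm{w})=\pi_{\bm{z}}\pi_{\bm{w}}^{*}$ for some Hilbert space $\cle$ and some holomorphic $\pi_{\bm{z}}\in\clo(\Omega,\cll(\cle,l^2_n))$. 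Granting this, one implication of Theorem~\ref{TH2} follows by putting $\tilde K(\bm{z},\bm{w})=\pi_{\bm{z}}\pi_{\bm{w}}^{*}$, which is automatically $\cll(l^2_n)$-valued and holomorphic/anti-holomorphic in the two variables, and the reverse implication by feeding a factorization $\pi$ of $\tilde K$ back into Theorem~\ref{TH1}; since $\cle$ is existentially quantified in Theorem~\ref{TH2}, the two occurrences of $\cle$ need not be matched.

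For the elementary fact, the forward (factorization $\Rightarrow$ positivity) direction is immediate: for $\bm{w}^{(1)},\dots,\bm{w}^{(p)}\in\Omega$ and $\zeta_1,\dots,\zeta_p\in l^2_n$,
$$\sum_{i,j=1}^{p}\inner{\tilde K(\bm{w}^{(i)},\bm{w}^{(j)})\zeta_j}{\zeta_i}_{l^2_n}=\Big\|\sum_{j=1}^{p}\pi_{\bm{w}^{(j)}}^{*}\zeta_j\Big\|^{2}\ge 0 .$$
For the converse I would run precisely the reproducing-kernel construction of Section~1: let $\cle=\clh_{\tilde K}$ be the Hilbert space obtained by completing the span of $\{\tilde K(\cdot,\bm{w})\zeta:\bm{w}\in\Omega,\ \zeta\in l^2_n\}$ under the inner product (\ref{nndinner}), a separable space since $\Omega$ is; set $\pi_{\bm{w}}^{*}\zeta=\tilde K(\cdot,\bm{w})\zeta$ and take $\pi_{\bm{z}}:\clh_{\tilde K}\to l^2_n$ to be the evaluation $f\mapsto f(\bm{z})$, which is bounded by (\ref{reproducing property}) and the Cauchy--Schwarz inequality. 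The reproducing property then gives $\pi_{\bm{z}}\pi_{\bm{w}}^{*}=\tilde K(\bm{z},\bm{w})$. The only point requiring a little care is the holomorphy of $\bm{z}\mapsto\pi_{\bm{z}}$; this follows by testing against the total set $\{\pi_{\bm{w}}^{*}\zeta\}$, since $\bm{z}\mapsto\inner{\pi_{\bm{z}}\pi_{\bm{w}}^{*}\zeta}{\eta}=\inner{\tilde K(\bm{z},\bm{w})\zeta}{\eta}$ is holomorphic, so $\pi_{\bm{z}}$ is weakly, hence strongly, holomorphic. (Alternatively, as already observed in the proof of Theorem~\ref{TH1}, once the pointwise identity holds, holomorphy of $\pi$ is forced by that of $K_{\clm}$ and $k$.)

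Finally, for the last assertion, suppose $k^{-1}$ is defined, meaning that $k(\bm{z},\bm{w})$ is nowhere zero on $\Omega\times\Omega$, so that $(\bm{z},\bm{w})\mapsto k(\bm{z},\bm{w})^{-1}$ is again holomorphic in $\bm{z}$ and anti-holomorphic in $\bm{w}$. Then any factorization $K_{\clm}=k\cdot\tilde K$ forces $\tilde K=k^{-1}K_{\clm}$, while conversely $k^{-1}K_{\clm}$ is automatically $\cll(l^2_n)$-valued with the correct analyticity; hence the first equivalence reads precisely: $\clm$ has an isometric co-extension to $\clr\otimes\cle$ for some $\cle$ if and only if $k^{-1}K_{\clm}$ is a positive definite kernel. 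I do not anticipate a genuine obstacle; the whole content is the translation, supplied by Theorem~\ref{TH1}, of the existence of an isometric co-extension into a factorization of the kernel of $\clm$ through $k$, combined with the routine Kolmogorov factorization, the only delicate bookkeeping being the holomorphy of $\pi$ just noted.
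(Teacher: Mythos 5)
Your proposal is correct and follows essentially the same route as the paper: the necessity is read off from Theorem~\ref{TH1}, and the sufficiency is obtained by taking $\cle=\clh(\tilde K)$ with $\pi_{\bm z}$ the evaluation operator, so that $\tilde K(\bm z,\bm w)=\pi_{\bm z}\pi_{\bm w}^{*}$ and Theorem~\ref{TH1} applies. Your write-up merely makes explicit a few points the paper leaves implicit (the Kolmogorov factorization in both directions, the holomorphy of $\pi$, and the $k^{-1}$ reformulation), all of which are handled correctly.
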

\NI \textsf{Proof.} Observe that, the equality in the statement tells us that the kernel function $\tilde{K}$ is $\cll(l^2_n)$-valued, where $n$ is the multiplicity of $\clm$. Since the necessary part follows from the previous theorem, all that remains to be shown is that the factorization given in the statement yields an isometric co-extension. If $\tilde{K}$ is given to be a positive definite kernel, then we let $\clh(\tilde{K})$ be the corresponding reproducing kernel Hilbert space (which is not necessarily a module over $A(\Omega)$ or  even over $\mathbb{C}[\bm{z}]$). Then we set $\cle = \clh(\tilde{K})$ and let $\pi_{\bm{z}} = \mbox{e}_{\bm{z}} \in \cll(\cle, l^2_n)$ be the evaluation operator for the reproducing kernel Hilbert space $\clh(\tilde{K})$. Then the fact that $\clr \otimes \cle$ is an isometric co-extension of $\clm$ follows immediately from the previous theorem since $\tilde{K}(\z, \w) = \pi_{\z} \pi^*_{\w}$. \qed

\begin{Remark}
If the kernel function $\tilde{K}$ defines a Hilbert space of holomorphic functions on $\Omega$ invariant under $\mathbb{C}[\z]$, then one can identify $\clm$ canonically with the Hilbert module tensor product, $\clr \otimes_{\mathbb{C}[\mathbf z]} \clh(\tilde{K})$, which yields an explicit representation of the co-isometry from the co-extension space $\clr \otimes \clh(\tilde{K})$ to $\clm$.
\end{Remark}

If $\clh(\tilde{K})$ is not invariant under the action of $\mathbb{C}[\z]$, we can still describe the co-extension space explicitly using a construction of Aronszajn. Let $\clm_1$ and $\clm_2$ be Hilbert spaces of holomorphic functions on $\Omega$ so that they possess reproducing kernels $K_1$ and $K_2$, respectively. Assume that the natural action of $\mathbb{C}[\z]$ on the Hilbert space $\clm_1$ is continuous; that is, the map $(p, h) \raro p h$ defines a bounded operator on $\clm_1$ for $p \in \mathbb{C}[\z]$. (We make no such assumption about the Hilbert space $\clm_2$.) Now, $\mathbb{C}[\z]$ acts naturally on the Hilbert space tensor product $\clm_1 \otimes \clm_2$ via the map $$(p, (h \otimes k)) \mapsto p \cdot h \otimes k, \,\; p \in \mathbb{C}[\z],\, h \in \clm_1,\, k \in \clm_2.$$
The map $h  \otimes k \mapsto h k$ identifies the Hilbert space $\clm_1 \otimes \clm_2$ as a reproducing kernel Hilbert space of holomorphic functions on $\Omega \times \Omega$ \cite{aron}. The module action is then the pointwise multiplication $(p, hk) \raro (ph) k$, where $((p h) k)(\z_1, \z_2) =  p(\z_1) h(\z_1) k(\z_2)$, $\z_1, \z_2 \in \Omega$.

We denote by $\clh$ the Hilbert module obtained by the Hilbert space $\clm_1 \otimes \clm_2$ with the above module action over $\mathbb{C}[\z]$. Let $\bigtriangleup \subseteq \Omega \times \Omega$ be the diagonal subset $\{(\z,\z): \z \in \Omega \}$ of $\Omega \times \Omega$. Let $\cls$ be the maximal submodule $\mathcal S$ of $\clm_1 \otimes \clm_2$ functions in $\clm_1 \otimes \clm_2$ which vanish on $\bigtriangleup$. Thus
$$ 0 \longrightarrow  \cls \stackrel{X} \longrightarrow \clm_1 \otimes \clm_2 \stackrel{Y} \longrightarrow  \cl{Q} \longrightarrow 0,$$
is a short exact sequence, where $\cl{Q} = (\clm_1 \otimes \clm_2) / \mathcal S$, $X$ is the inclusion map and $Y$ is the natural quotient map. (Note that $\cls = \{0\}$ is possible in which case $\mathcal Q = \clm_1 \otimes \clm_2$.) One can appeal to an extension of an earlier result of Aronszajn \cite{aron} to
analyze the quotient module $\mathcal Q $ when the given module is a reproducing kernel Hilbert space. The reproducing kernel of $\mathcal H$ is then the point-wise product $K_1 (\z,\w) K_2 (\u,\v)$ for $\z,\w,\u,\,\v \in \Omega$.
Set $\cl H_{\text{res}} = \{f_{|\,\bigtriangleup} : f\in \cl H\}$ and $\|f_{|\,\bigtriangleup}\|_{\text{res}} = \inf\{\|g\|_{\cl H} : g\in \cl H, g_{|\,\bigtriangleup} \equiv f_{|\, \bigtriangleup}\}$.

\begin{Proposition}[Aronszajn] The module $\cl H_{\text{res}}$ is a kernel
Hilbert module consisting of holomorphic functions on $\bigtriangleup$.  Its kernel function, $K_{\text{res}\, \bigtriangleup}$, is the restriction to $\bigtriangleup$ in both sets of variables of the original kernel function $K$ for the Hilbert module $\mathcal H$. Moreover, the quotient module $\cl Q$ corresponding to the submodule in $\mathcal H$ of functions vanishing on $\bigtriangleup$ is isometrically isomorphic to $\cl H_{\text{res}}$.
\end{Proposition}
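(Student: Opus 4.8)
The plan is to identify $\mathcal{H}_{\text{res}}$ with the orthogonal complement $\mathcal{S}^{\perp}$ of $\mathcal{S}$ in $\mathcal{H}$ and to transport the kernel and the module structure across this identification. First I would record that $\mathcal{S}=\{f\in\mathcal{H}: f|_{\bigtriangleup}\equiv 0\}$ is a closed submodule of $\mathcal{H}$: each evaluation $f\mapsto f(\bm{z},\bm{z})$ equals $\langle f,\, K((\bm{z},\bm{z}),\cdot)\rangle_{\mathcal{H}}$ and is therefore bounded, so $\mathcal{S}$ is closed; and since the $\mathbb{C}[\bm{z}]$-action on $\mathcal{H}=\clm_1\otimes\clm_2$ multiplies in the first variable only, $(p\cdot f)(\bm{z},\bm{z})=p(\bm{z})f(\bm{z},\bm{z})$, so $\mathcal{S}$ is invariant. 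Consequently the quotient map $Y:\mathcal{H}\to\mathcal{Q}=\mathcal{H}/\mathcal{S}$ restricts to a unitary module isomorphism $\mathcal{S}^{\perp}\to\mathcal{Q}$, the $\mathbb{C}[\bm{z}]$-action on $\mathcal{Q}$ is bounded (as $\clm_1$, hence $\mathcal{H}$, carries a bounded action), and the orthogonal projection onto $\mathcal{S}^{\perp}$ picks out the unique element of minimal norm in each coset.

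Next I would connect this to the restriction picture. The restriction map $R:\mathcal{H}\to\mbox{Hol}(\Omega)$, $Rf=f|_{\bigtriangleup}$ (after identifying $\bigtriangleup$ with $\Omega$ via $\bm{z}\leftrightarrow(\bm{z},\bm{z})$), has kernel precisely $\mathcal{S}$, so $R|_{\mathcal{S}^{\perp}}$ is injective with range $R(\mathcal{H})=\mathcal{H}_{\text{res}}$. For $f\in\mathcal{H}_{\text{res}}$ the infimum defining $\|f\|_{\text{res}}$ is taken over all $g\in\mathcal{H}$ with $g|_{\bigtriangleup}=f$, i.e. over a single coset of $\mathcal{S}$, so it equals the quotient norm of that coset and is attained exactly at its $\mathcal{S}^{\perp}$-representative. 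Hence $R|_{\mathcal{S}^{\perp}}:\mathcal{S}^{\perp}\to\mathcal{H}_{\text{res}}$ is an isometric isomorphism; in particular $\mathcal{H}_{\text{res}}$ is complete. Composing, $\mathcal{Q}\cong\mathcal{S}^{\perp}\cong\mathcal{H}_{\text{res}}$ isometrically, and these maps intertwine the module actions: under $R$ the compressed multiplication $P_{\mathcal{S}^{\perp}}M_p|_{\mathcal{S}^{\perp}}$ on $\mathcal{S}^{\perp}$ becomes the pointwise multiplication $(p,f)\mapsto (pf)|_{\bigtriangleup}$, since $R$ annihilates $\mathcal{S}$ and $(p\cdot g)(\bm{z},\bm{z})=p(\bm{z})g(\bm{z},\bm{z})$. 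Thus $\mathcal{H}_{\text{res}}$ is a kernel Hilbert module of holomorphic functions on $\bigtriangleup$, isometrically isomorphic to $\mathcal{Q}$.

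Finally, to compute the kernel, the key observation is that for every $\bm{z}\in\Omega$ the vector $\kappa_{\bm{z}}:=K((\bm{z},\bm{z}),\cdot)\in\mathcal{H}$ already lies in $\mathcal{S}^{\perp}$, because $\langle s,\kappa_{\bm{z}}\rangle_{\mathcal{H}}=s(\bm{z},\bm{z})=0$ for all $s\in\mathcal{S}$. Hence for $g\in\mathcal{S}^{\perp}$ with $f:=Rg\in\mathcal{H}_{\text{res}}$ we get $f(\bm{z})=g(\bm{z},\bm{z})=\langle g,\kappa_{\bm{z}}\rangle_{\mathcal{H}}=\langle f,\, R\kappa_{\bm{z}}\rangle_{\mathcal{H}_{\text{res}}}$, so $R\kappa_{\bm{z}}$ is the reproducing kernel of $\mathcal{H}_{\text{res}}$ at $\bm{z}$; evaluating it at $\bm{w}$ gives $K_{\text{res}\,\bigtriangleup}(\bm{z},\bm{w})=\kappa_{\bm{z}}(\bm{w},\bm{w})=K((\bm{z},\bm{z}),(\bm{w},\bm{w}))=K_1(\bm{z},\bm{w})K_2(\bm{z},\bm{w})$, precisely the restriction of $K$ to $\bigtriangleup$ in both sets of variables. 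The only genuine point to watch is that $\|\cdot\|_{\text{res}}$ really is the quotient norm — that the infimum is attained on the $\mathcal{S}^{\perp}$-representative, so $R|_{\mathcal{S}^{\perp}}$ is isometric and not merely norm-decreasing — together with checking that the module action descends correctly through the identification; the kernel computation is then immediate from $\kappa_{\bm{z}}\in\mathcal{S}^{\perp}$, and everything else is soft Hilbert-space theory.
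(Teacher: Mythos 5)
Your proof is correct and complete. The paper itself does not supply an argument for this Proposition --- it simply attributes the statement to Aronszajn and refers to \cite{aron} --- so there is nothing in the text to compare against; but what you have written is precisely the classical argument that the citation stands in for. The three load-bearing points are all handled properly: (i) $\cls$ is closed because the diagonal evaluations are bounded via the kernel, and is a submodule because the action multiplies only in the first variable; (ii) the restriction map $R$ kills exactly $\cls$, and the infimum defining $\|\cdot\|_{\text{res}}$ ranges over a single coset of $\cls$, so it is the quotient norm and is attained at the $\cls^{\perp}$-representative, making $R|_{\cls^{\perp}}$ a unitary onto $\mathcal{H}_{\text{res}}$ rather than merely a contraction; (iii) the diagonal kernel vectors $\kappa_{\bm z}=K(\cdot,(\bm z,\bm z))$ already lie in $\cls^{\perp}$, which makes the identification of $R\kappa_{\bm z}$ as the reproducing element of $\mathcal{H}_{\text{res}}$ immediate and yields $K_{\text{res}\,\bigtriangleup}(\bm z,\bm w)=K((\bm z,\bm z),(\bm w,\bm w))=K_1(\bm z,\bm w)K_2(\bm z,\bm w)$. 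The intertwining of the compressed multiplication with pointwise multiplication on the diagonal is also checked correctly, since $R$ annihilates the $\cls$-component produced by the compression. The only blemish is notational: you write $K((\bm z,\bm z),\cdot)$ for the kernel vector where the paper's convention (holomorphic in the first variable, anti-holomorphic in the second) would put the evaluation point in the second slot; by Hermitian symmetry of the kernel this does not affect any step, but it is worth normalizing for consistency with equation (1.2) of the paper.
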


We now reformulate this result to apply to the context of Theorem \ref{TH2}.

\begin{Proposition} \label{main}
Let $\mathcal M$ be a Hilbert module over the polynomial algebra $\mathbb{C}[\z]$ and $K_{\clm}$ be its reproducing kernel defined on the domain $\Omega \subseteq \mathbb{C}^m$. Suppose $K_{\clm}$ is the point-wise product of two positive definite kernels $K_1$ and $K_2$ on $\Omega \times \Omega$ and $\clm_1$ and $\clm_2$ are the corresponding kernel Hilbert spaces of holomorphic functions on $\Omega$. Assume that the polynomial algebra $\mathbb{C}[\z]$ acts on $\mathcal M_1$ continuously. Then the compression of the natural action of $\mathbb{C}[\z]$ on $\mathcal M_1 \otimes \mathcal M_2$ given by the operators $M_p\otimes I_{\clm_2}$, $p \in \mathbb{C}[\z]$, to $\mathcal Q \subseteq \mathcal M_1 \otimes \mathcal M_2$, coincides with the action of $\mathcal A(\Omega)$ on $\mathcal M$; that is, $\mathcal M_1 \otimes \mathcal M_2$ is an isometric co-extension of $\mathcal M$.
\end{Proposition}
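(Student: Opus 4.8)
The plan is to read Proposition~\ref{main} off the Aronszajn Proposition above once the module actions have been matched up; no new analysis is needed beyond a careful identification of kernels and of the action induced on the diagonal. First I would realize the Hilbert space $\mathcal M_1\otimes\mathcal M_2$, carrying the module action $(p,h\otimes k)\mapsto (ph)\otimes k$, as the reproducing kernel Hilbert module $\mathcal H$ of holomorphic functions on $\Omega\times\Omega$ described just before the Aronszajn Proposition: its reproducing kernel is $K\big((\z_1,\z_2),(\w_1,\w_2)\big)=K_1(\z_1,\w_1)\,K_2(\z_2,\w_2)$, the vector $h\otimes k$ is the function $(\z_1,\z_2)\mapsto h(\z_1)k(\z_2)$, and $p\in\mathbb C[\z]$ sends it to $(\z_1,\z_2)\mapsto p(\z_1)h(\z_1)k(\z_2)$. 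The continuity hypothesis on $\mathcal M_1$ is exactly what makes this action bounded on $\mathcal H$, since $\|M_p\otimes I_{\mathcal M_2}\|=\|M_p\|_{\mathcal M_1}$.

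Next I would apply the Aronszajn Proposition to $\mathcal H$ with $\bigtriangleup=\{(\z,\z):\z\in\Omega\}$ the diagonal. It gives that $\mathcal Q=\mathcal H/\mathcal S$, with $\mathcal S$ the maximal submodule of functions vanishing on $\bigtriangleup$, is isometrically isomorphic to $\mathcal H_{\text{res}}$, whose reproducing kernel is the restriction of $K$ to $\bigtriangleup$ in both sets of variables, namely \[ K\big((\z,\z),(\w,\w)\big)=K_1(\z,\w)K_2(\z,\w)=K_{\mathcal M}(\z,\w). \] Identifying $\bigtriangleup$ with $\Omega$, the restriction map $f\mapsto f_{|\,\bigtriangleup}$ carries $(\z_1,\z_2)\mapsto p(\z_1)h(\z_1)k(\z_2)$ to $\z\mapsto p(\z)h(\z)k(\z)=p(\z)\,(h\otimes k)_{|\,\bigtriangleup}(\z)$, so the module action transported onto $\mathcal H_{\text{res}}$ by this isomorphism is pointwise multiplication by polynomials.

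Finally I would use the elementary fact that a reproducing kernel determines its Hilbert space: $\mathcal H_{\text{res}}$ and $\mathcal M$ are both reproducing kernel Hilbert spaces of holomorphic functions on $\Omega$ with kernel $K_{\mathcal M}$, so they coincide as Hilbert spaces of functions, and on each the coordinate multipliers — hence the action of every $p\in\mathbb C[\z]$ — are pointwise multiplication; thus they coincide as Hilbert modules. Composing with the Aronszajn isomorphism gives $\mathcal M\cong\mathcal Q=(\mathcal M_1\otimes\mathcal M_2)/\mathcal S$, which is precisely the assertion that $\mathcal M_1\otimes\mathcal M_2$ is an isometric co-extension of $\mathcal M$, and under this identification the compression of $M_p\otimes I_{\mathcal M_2}$ to $\mathcal Q$ is multiplication by $p$ on $\mathcal M$.

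The one point that demands care, rather than any depth, is the passage through the Aronszajn isomorphism: one must check that it is a $\mathbb C[\z]$-module map — i.e. that restriction to the diagonal intertwines $M_p\otimes I_{\mathcal M_2}$ with pointwise multiplication by $p$ — and then that this pointwise multiplication on $\mathcal H_{\text{res}}$ is genuinely the given module action of $\mathcal M$, which is the standard fact that the coordinate multiplications on a reproducing kernel Hilbert module are pinned down by its kernel. Everything else is routine bookkeeping about the tensor-product kernel and its restriction to $\bigtriangleup$.
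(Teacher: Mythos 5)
Your proof is correct and follows exactly the route the paper intends: the paper offers no separate argument for Proposition~\ref{main}, presenting it as a direct reformulation of the Aronszajn Proposition, and your write-up supplies precisely the bookkeeping (product kernel on $\Omega\times\Omega$, restriction to the diagonal giving $K_1K_2=K_{\mathcal M}$, and the check that restriction intertwines $M_p\otimes I_{\mathcal M_2}$ with multiplication by $p$) that this reformulation tacitly relies on.
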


Thus the dilation space in Theorem \ref{TH2} can be realized as a ``reduced'' Hilbert module tensor product which coincides with the module tensor product when the coefficient space is also a module over $\mathbb{C}[\z]$.

In \cite{rgdgm} this result was used to analyze the quotient $H^2(\bb D^2)/[z_1-z_2]$.
Since the kernel function for $H^2(\bb D^2)$ is
$\frac1{(1-z_1\bar w_1)}$ $\frac1{(1-z_2\bar w_2)}$,
restricting the kernel function to $\bigtriangleup = \{(w, z) : w-z=0\}$ and using the $(u,v)$ coordinates; that is, $(u = \frac{z_1+z_2}{2}, w = \frac{z_1-z_2}{2}$), we obtain that $
K_\mathcal Q(u,u') = \frac{1}{(1-u\bar{u}')^2}$ for $u,u'$ in $\{(w, z) : w-z=0\}$. Since this is the
kernel function for the Bergman space $L_a^2(\mathbb D)$, the
quotient module in this case is isometrically isomorphic to the Bergman module. Thus we obtain an isometric co-extension of the Bergman shift. (Note that this extension agrees with the one obtained in the Sz.-Nagy - Foias model.) Thus the extension of Aronszajn's result enables one to obtain the kernel function for the quotient module and from it, one can construct the Hilbert space itself.

We end this section with the following remarks.

\begin{Remark} Applying the same argument, as used for the necessity part of Theorem \ref{TH1}, to the left hand square of the diagram yields the following relation between the kernel functions for $\cls$ and $\clr \otimes \cle$: $$N_{\bm{z}} N_{\bm{w}}^* = i_{\bm{z}} P_{\bm{z}} P_{\cls} P_{\bm{w}}^* i_{\bm{w}},$$
\NI where $P_{\cls}$ is the projection onto $\cls \subseteq \clr \otimes \cle$.  Thus the kernel function for $\mathcal S$ is related to that of the Hardy module but because $P_\mathcal S$ does not "commute" with these terms, the relationship is more complicated.
\end{Remark}

\begin{Remark} Using the relation $N_{\bm{z}} N_{\bm{w}}^* = i_{\bm{z}} P_{\bm{z}} P_{\cls} P_{\bm{w}}^* i_{\bm{z}},$ we can compare formulas for the kernel functions, where $\cls$ consists of functions vanishing on a hypersurface (cf. \cite{DMV}). We hope to return to such applications at a later time.
\end{Remark}

\section{Co-extensions and Hereditary functional calculus}

Let $p$ be a polynomial in the $2 m$ variables $\z, \w$, where the $\z$ -variables all commute and the $\w$-variables all commute but we assume nothing about the relation of the $\z$ and $\w$ variables. For any commuting $m$ - tuple of operators $\mathbf T = (T_1, \ldots , T_m)$, following Agler (see \cite{Agl}) we define the value of $p$ at $\mathbf T$ using the hereditary functional calculus:
$$
p(\mathbf T, \mathbf T^*) = \sum_{I,J} a_{I,J} \mathbf T^I {\mathbf T^*}^J,
$$
where $p(\z,\w) = \sum_{I,J} a_{I,J} \z^I \w^J$ and $I=(i_1, \ldots,i_m)$, $J = (j_1, \ldots ,j_m)$ are multi-indices of length $m$. Here, in the ``non-commutative polynomial'' $p(\z, \w)$, the ``$\z$'s'' are all placed on the left, while the ``$\w$'s'' are placed on the right.

Let  $\mathbf{M}= (M_1,\ldots, M_m)$ be the $m$ - tuple of multiplication operators on a reproducing kernel Hilbert space $\mathcal{H}$ defined on the polydisk $\D^m$. Let $K$ be the reproducing kernel for $\mathcal{H}$.  Let $\mathbb S_{\mathbb D^m}^{-1}(\z,\w) = \prod_{\ell = 1}^m \big (1-z_\ell \bar{w}_\ell \big ) = \sum_{0 \leq |I| =  |J| \leq m} \z^I \bar{\w}^J$,  for $\z,\w \in \mathbb D^m$; that is, $\mathbb S_{\mathbb D^m}$ is the S{z}eg\"{o} kernel for the polydisk $\mathbb D^m$. Observe that $K(\z, \w)$ is in $\cll(l^2_n)$ and hence a calculation shows
\begin{eqnarray*}
\big ( \mathbb S_{\mathbb D^m}^{-1}(\mathbf M, \mathbf M^*)\big ) K(\z , \w) &=&\Big ( \big ( \sum_{0 \leq |I| =  |J| \leq m} \z^I \bar{\w}^J \big ) (\mathbf M, \mathbf M^*) \Big ) K(\z , \w) \\
&=& \Big (\sum_{0\leq |I| = |J| \leq m} \mathbf M^I {\mathbf M^J}^* \Big ) K(\z,\w) \\
&=& \Big ( \sum_{0\leq |I|  = |J| \leq m} \z^I \bar{\w}^J \Big ) K(\z,\w) \\
&=& \mathbb S_{\mathbb D^m}^{-1}(\z,\w) K(\z,\w).
\end{eqnarray*}
Hence, $\mathbb S_{\mathbb D^m}^{-1}(\mathbf M, \mathbf M^*) \geq 0$ if and only if $\mathbb S_{\mathbb D^m}^{-1}(\z,\w) K(\z,\w)$ is a non-negative definite kernel, which implies the following.

\begin{Theorem}\label{TH3} The positivity of the operator
$\mathbb S_{\mathbb D^m}^{-1}(\mathbf M, \mathbf M^*)$, defined via the hereditary functional calculus, on the Hilbert space $\mathcal H$ possessing the reproducing kernel $K$, is equivalent to the factorization
$$
K(\z,\w) = \mathbb S_{\mathbb D^m}(\z,\w) Q(\z,\w),\,\, \z,\w \in \mathbb D^m,
$$
where $Q$ is some positive definite kernel on the polydisk $\mathbb D^m$.
\end{Theorem}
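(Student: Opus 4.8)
The plan is to read the theorem off the hereditary-calculus identity established immediately above its statement, after making that identity precise on a dense subspace, and then to append the one-line observation that the Szeg\"{o} kernel is invertible as a scalar kernel.

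First I would verify the identity on the dense subspace $\clh^0 \subseteq \clh$ spanned by the kernel sections $K(\cdot,\w)\eta$, $\w \in \mathbb D^m$, $\eta \in l^2_n$. Since $\mathbb S_{\mathbb D^m}^{-1}(\z,\w) = \sum_{I,J} a_{I,J}\, \z^I \bar{\w}^J$ is a polynomial with Hermitian symbol, $\mathbb S_{\mathbb D^m}^{-1}(\mathbf M, \mathbf M^*) = \sum_{I,J} a_{I,J}\, \mathbf M^I {\mathbf M^*}^J$ is a bounded self-adjoint operator on $\clh$. Using the eigenvector relation $(\ref{eigenspace})$, namely $M_\ell^* K(\cdot,\w)\eta = \bar{w}_\ell K(\cdot,\w)\eta$, together with the reproducing property $(\ref{reproducing property})$, one computes for $f = \sum_i K(\cdot,\w^{(i)})\eta_i$ that
$$
\big\langle \mathbb S_{\mathbb D^m}^{-1}(\mathbf M, \mathbf M^*) f,\, f \big\rangle = \sum_{i,j} \mathbb S_{\mathbb D^m}^{-1}\big(\w^{(j)}, \w^{(i)}\big)\, \big\langle K\big(\w^{(j)}, \w^{(i)}\big)\eta_i,\, \eta_j \big\rangle .
$$
Comparing this with the defining inequality $(\ref{existence reprod})$ of a non-negative definite kernel, the right-hand side is $\geq 0$ for every $p$, all points $\w^{(i)} \in \mathbb D^m$, and all vectors $\eta_i \in l^2_n$ precisely when the $\cll(l^2_n)$-valued function $(\z,\w) \mapsto \mathbb S_{\mathbb D^m}^{-1}(\z,\w) K(\z,\w)$ is a non-negative definite kernel on $\mathbb D^m$. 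Hence $\mathbb S_{\mathbb D^m}^{-1}(\mathbf M, \mathbf M^*) \geq 0$ on $\clh$ if and only if $\mathbb S_{\mathbb D^m}^{-1} \cdot K$ is positive definite.

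It remains to match this with the stated factorization. If $\mathbb S_{\mathbb D^m}^{-1} \cdot K$ is positive definite, set $Q(\z,\w) := \mathbb S_{\mathbb D^m}^{-1}(\z,\w) K(\z,\w)$; this is holomorphic in $\z$ and anti-holomorphic in $\w$ because $\mathbb S_{\mathbb D^m}^{-1}(\z,\w) = \prod_{\ell=1}^m (1 - z_\ell \bar{w}_\ell)$ and $K$ both are, it is positive definite by hypothesis, and since $\mathbb S_{\mathbb D^m}(\z,\w)\, \mathbb S_{\mathbb D^m}^{-1}(\z,\w) \equiv 1$ we get $K(\z,\w) = \mathbb S_{\mathbb D^m}(\z,\w) Q(\z,\w)$. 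Conversely, if $K = \mathbb S_{\mathbb D^m}\, Q$ for some positive definite kernel $Q$, then multiplying through by $\mathbb S_{\mathbb D^m}^{-1}$ yields $\mathbb S_{\mathbb D^m}^{-1} \cdot K = Q$, which is positive definite, so the previous paragraph gives $\mathbb S_{\mathbb D^m}^{-1}(\mathbf M, \mathbf M^*) \geq 0$.

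There is no serious obstacle; the theorem essentially repackages the preceding computation. The only points requiring care are the order of substitution in the hereditary functional calculus — the $\z$-variables must be placed to the left of the $\w$-variables — so that the scalar extracted at a pair of kernel sections is exactly $\mathbb S_{\mathbb D^m}^{-1}(\w^{(j)}, \w^{(i)})$ rather than its conjugate transpose, and the routine fact that $\mathbb S_{\mathbb D^m}^{-1}(\mathbf M, \mathbf M^*)$ is genuinely a bounded self-adjoint operator, which holds because it is a polynomial in the bounded operators $M_1, \ldots, M_m$ and their adjoints with $\overline{\mathbb S_{\mathbb D^m}^{-1}(\z,\w)} = \mathbb S_{\mathbb D^m}^{-1}(\w,\z)$. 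One may also note that $\mathbb S_{\mathbb D^m}$ is itself positive definite, so the factorization exhibits $K$ as a Schur product of two positive kernels.
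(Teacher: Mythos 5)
Your proposal is correct and follows essentially the same route as the paper: the paper's argument is precisely the hereditary-calculus computation $\mathbb S_{\mathbb D^m}^{-1}(\mathbf M, \mathbf M^*)$ acting on kernel sections, yielding $\mathbb S_{\mathbb D^m}^{-1}(\z,\w)K(\z,\w)$, so that operator positivity is equivalent to non-negative definiteness of that product kernel, and the factorization $K = \mathbb S_{\mathbb D^m} Q$ with $Q = \mathbb S_{\mathbb D^m}^{-1}K$ is then immediate since $\mathbb S_{\mathbb D^m}^{-1}$ is the nonvanishing polynomial $\prod_\ell(1 - z_\ell\bar w_\ell)$. Your version merely makes explicit the quadratic-form verification on the dense span of kernel sections, which is exactly what the paper's displayed calculation is implicitly doing.
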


As an application of Theorem \ref{TH2} and Theorem \ref{TH3}, we obtain half of our main result for the Hardy module $H^2(\mathbb{D}^m) \otimes \mathcal Q$.

\begin{Theorem} \label{dil}
Let $\mathcal H$ be a Hilbert module over the polydisk algebra $\mathcal A(\mathbb D^m)$ with reproducing kernel $K(\z, \w)$.  Assume that the operator $\mathbb S_{\mathbb D^m}^{-1}(\mathbf M,\mathbf M^*)$, defined via the hereditary functional calculus, is positive.  Then $\mathcal H$ can be realized as a quotient module of the Hardy module $H^2(\mathbb D^m) \otimes \mathcal Q$ over
the algebra $\mathcal A(\mathbb D^m)$ for some Hilbert space $\mathcal Q$, and conversely. Hence $\clh$ has an isometric co-extension to $H^2(\mathbb{D}^m) \otimes \mathcal Q$ in this case.
\end{Theorem}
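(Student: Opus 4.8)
\NI \textsf{Proof.} (Plan.) The idea is to chain the two equivalences already established, taking as the ambient ``model'' module $\clr = H^2(\mathbb D^m)$, whose reproducing kernel is precisely the Szeg\"{o} kernel $\mathbb S_{\mathbb D^m}$. To begin, I would record that since $\clh$ is a Hilbert module over $\mathcal A(\mathbb D^m)$ the coordinate multiplication operators $M_1, \dots, M_m$ are bounded and commute, so $\mathbf M = (M_1, \dots, M_m)$ is a commuting $m$-tuple and $\mathbb S_{\mathbb D^m}^{-1}(\mathbf M, \mathbf M^*)$ is a well-defined bounded self-adjoint operator on $\clh$ (a finite linear combination of the operators $\mathbf M^I {\mathbf M^*}^J$), so the hypothesis is meaningful.

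For the direct implication, I would apply Theorem \ref{TH3} to convert the operator inequality $\mathbb S_{\mathbb D^m}^{-1}(\mathbf M, \mathbf M^*) \geq 0$ into the kernel factorization $K(\z, \w) = \mathbb S_{\mathbb D^m}(\z, \w)\, Q(\z, \w)$ for some positive definite kernel $Q$ on $\mathbb D^m$. Then I would feed this into the sufficiency half of Theorem \ref{TH2}, with $k = \mathbb S_{\mathbb D^m}$ (the kernel of $\clr = H^2(\mathbb D^m)$) and $\tilde K = Q$: setting $\mathcal Q := \clh(Q)$, the reproducing kernel Hilbert space of $Q$, and $\pi_{\z} = e_{\z} \in \cll(\mathcal Q, l^2_n)$ its evaluation operator, Theorem \ref{TH2} (via the co-isometry $X^*$ obtained from $X Q_{\z}^* \eta = P_{\z}^* \pi_{\z}^* \eta$ in Theorem \ref{TH1}) produces an isometric co-extension of $\clh$ to $H^2(\mathbb D^m) \otimes \mathcal Q$, i.e.\ realizes $\clh$ as a quotient module of $H^2(\mathbb D^m) \otimes \mathcal Q$.

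For the converse, suppose $\clh \cong (H^2(\mathbb D^m) \otimes \mathcal Q)/\cls$ for some submodule $\cls$, i.e.\ $\clh$ has an isometric co-extension to $H^2(\mathbb D^m) \otimes \mathcal Q$. By the necessity half of Theorem \ref{TH2} (equivalently Theorem \ref{TH1}, via localization), $K(\z, \w) = \mathbb S_{\mathbb D^m}(\z, \w)\, \tilde K(\z, \w)$ for some positive definite kernel $\tilde K$ on $\mathbb D^m$, whence Theorem \ref{TH3} gives $\mathbb S_{\mathbb D^m}^{-1}(\mathbf M, \mathbf M^*) \geq 0$. I do not anticipate a real difficulty: all the substance sits in Theorems \ref{TH1}, \ref{TH2} and \ref{TH3}, and the only point needing mild care is that those theorems are phrased for quasi-free modules of a fixed multiplicity $n$ with surjective evaluations, so one should either impose that structure on $\clh$ or simply reuse the explicit construction of $X$ with $\pi_{\z} = e_{\z}$ for $\clh(Q)$, which uses nothing beyond $\clh$ being a reproducing kernel Hilbert module.
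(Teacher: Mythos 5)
Your proposal is correct and follows exactly the route the paper intends: it chains Theorem \ref{TH3} (positivity of $\mathbb S_{\mathbb D^m}^{-1}(\mathbf M,\mathbf M^*)$ is equivalent to the Szeg\"{o}-kernel factorization of $K$) with Theorem \ref{TH2} applied to $\clr = H^2(\mathbb D^m)$, in both directions. The paper gives no separate argument beyond declaring the theorem an application of those two results, so your write-up, including the remark about the quasi-free/multiplicity hypotheses, is essentially the paper's proof made explicit.
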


A closely related result was obtained by Athavale (Theorem 2.6 in \cite{Ath}) but with a different proof.

Notice that any $m$ - tuple  of doubly commuting contractions on a functional Hilbert space over $\mathcal A(\mathbb{D}^m)$ satisfies the hypothesis of Theorem \ref{dil}. Thus, we recover the result of Sz.-Nagy and Foias (cf. \cite{Na-Fo}) in this situation.  But the class covered by the theorem is much larger.

In particular, $\clm^m = \clm \otimes_{\cla(\mathbb{D}^m)} \cdots \otimes_{\cla(\mathbb{D}^m)} \clm$ always possesses a dilation to the Hardy module $H^2(\mathbb D^m)\otimes \mathcal E$, where $\mathcal E$ is some Hilbert space, if $\clm$ is contractive. Let $\mathbf M$  denote the $m$ - tuple of commuting contractions on the Hilbert module $\mathcal M$  possessing the reproducing kernel $K$.  The contractivity condition implies that $K(\z,\w) = (1-z_\ell \bar{w}_\ell)^{-1} Q_\ell(\z,\w)$ for some positive definite kernel $Q_\ell$ and for each $\ell = 1,2,\ldots , m$. Thus
$$
K^m(\z,\w) = \mathbb S_{\mathbb D^m}(\z,\w) Q(\z,\w),\,\,\z,\w \in \mathbb D^m,
$$
where $Q=\prod_{\ell=1}^m Q_\ell$.
Thus the Hilbert module $\clm^m = \clm \otimes_{\cla(\mathbb{D}^m)} \cdots \otimes_{\cla(\mathbb{D}^m)} \clm$ corresponding to the positive definite kernel  $K^m$ is contractive and admits the kernel
$\mathbb S_{\mathbb D^m}$ as a factor, as shown above. This shows that $\clm^m$ has an isometric co-extension to $H^2_{\mathcal Q}(\mathbb{D}^m)$, where $\mathcal Q$ is the reproducing kernel Hilbert space for the kernel $Q$.

We now state our main result for the Hardy module over the polydisk algebra and investigate the uniqueness of an isometric co-extension $X : \clr \otimes \cle \raro \clm$ which is \textit{minimal}. Recall this means that the smallest reducing submodule of $\clr \otimes \cle$ containing $X^* \clm$ is $\clr \otimes \cle$ itself. Although we believe such a uniqueness result holds for a more general class of quasi-free Hilbert modules $\clr$, we prove it only in the cases when $\clr = H^2_m$ and $\clr = H^2(\mathbb{D}^m)$ for $m \geq 1$. When $\clr = H^2_m$, the result follows from the uniqueness of the minimal isometric dilation by Arveson (see \cite{Ar}). We prove the case when $\clr = H^2(\mathbb{D}^m)$. This result was proved in \cite{DF} for the case of multiplicity one. The proof is based on operator theory exploiting the fact that the co-ordinate multipliers define doubly commuting isometries.

\begin{Theorem}\label{cor1}
If $\clh$ is a contractive reproducing kernel Hilbert space over $\cla (\mathbb{D}^m)$, then $\clh$ has an isometric $H^2 (\mathbb{D}^m) \otimes {\mathcal E}$ co-extension for some Hilbert space $\mathcal  E$ if and only if $\mathbb S_{\mathbb D^m}^{-1}(\mathbf M,\mathbf M^*) \geq 0$ or, equivalently, if and only if $\mathbb S_{\mathbb D^m}^{-1} K \geq 0$, where $K$ is the kernel function for $\clh$. Note that this means there exists a co-isometric module map $Y : H^2 (\mathbb{D}^m) \otimes {\mathcal  E} \raro \clh$. Moreover, if an $H^2 (\mathbb{D}^m) \otimes {\mathcal  E}$ isometric co-extension exists, then the minimal one is unique.
\end{Theorem}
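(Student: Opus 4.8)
The plan is to assemble the equivalence of the three conditions from the machinery already in place, and then to handle the uniqueness of the minimal co-extension separately by a direct operator-theoretic argument exploiting the doubly commuting structure of the shift on $H^2(\mathbb{D}^m)$. For the equivalences, the chain $\mathbb{S}_{\mathbb{D}^m}^{-1}(\mathbf M,\mathbf M^*)\geq 0 \iff \mathbb{S}_{\mathbb{D}^m}^{-1}K\geq 0 \iff \clh$ has an $H^2(\mathbb{D}^m)\otimes\cle$ isometric co-extension is immediate: the first equivalence is precisely the computation preceding Theorem \ref{TH3} (substitution of $\mathbf M$ into the reproducing kernel), and combining Theorem \ref{TH3} with Theorem \ref{TH2} (taking $\clr = H^2(\mathbb{D}^m)$, whose kernel is $\mathbb{S}_{\mathbb{D}^m}$ and is nonvanishing on $\mathbb{D}^m\times\mathbb{D}^m$, so $k^{-1}$ is defined) gives the second. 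This is essentially a restatement of Theorem \ref{dil} together with the observation that a quotient module presentation is the same data as a co-isometric module map $Y$, i.e., an isometric co-extension $X = Y^*$.

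The substantive part is uniqueness of the minimal isometric co-extension. Suppose $X_1 : H^2(\mathbb{D}^m)\otimes\cle_1 \to \clh$ and $X_2 : H^2(\mathbb{D}^m)\otimes\cle_2 \to \clh$ are two minimal isometric co-extensions; write $V_j^{(\ell)}$ for the $\ell$-th coordinate multiplication operator $M_{z_\ell}\otimes I_{\cle_j}$ on $H^2(\mathbb{D}^m)\otimes\cle_j$, and $T^{(\ell)} = M_{z_\ell}$ on $\clh$. First I would record the key structural fact: since $X_j^*$ is a module map and $X_j$ is isometric, $X_j X_j^* = I_{\clh}$ is a co-isometric module map, so $X_j^* T^{(\ell)*} = V_j^{(\ell)*} X_j^*$, equivalently $T^{(\ell)} X_j = X_j V_j^{(\ell)}$. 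Moreover the $V_j^{(\ell)}$ are \emph{doubly commuting isometries} (the defining feature of the Hardy shift on the polydisk), so by a Wold-type decomposition for doubly commuting isometries each $H^2(\mathbb{D}^m)\otimes\cle_j$ is itself the minimal isometric co-extension of $\clh$ determined intrinsically by the wandering data. The plan is to build a unitary $U : H^2(\mathbb{D}^m)\otimes\cle_1 \to H^2(\mathbb{D}^m)\otimes\cle_2$ intertwining the $V_1^{(\ell)}$ with the $V_2^{(\ell)}$ and satisfying $X_2 U = X_1$. To do this, I would reduce to matching the coefficient spaces: using minimality, the ranges $X_j^*\clh$ generate $H^2(\mathbb{D}^m)\otimes\cle_j$ under the $V_j^{(\ell)}$ and their adjoints; localizing at a point $\bm z\in\mathbb{D}^m$ (as in the proof of Theorem \ref{TH1}) identifies $\cle_j$ with $\clh(\tilde K)$ for the \emph{same} kernel $\tilde K = \mathbb{S}_{\mathbb{D}^m}^{-1} K$, since the factorization $K(\bm z,\bm w) = \mathbb{S}_{\mathbb{D}^m}(\bm z,\bm w)\,\pi^{(j)}_{\bm z}\pi^{(j)*}_{\bm w}$ forces $\pi^{(1)}$ and $\pi^{(2)}$ to differ by a constant unitary from $\cle_1$ onto $\cle_2$ once one quotients out the kernels of the evaluation maps — this is where minimality is used. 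Promoting that constant unitary to $U = I_{H^2(\mathbb{D}^m)}\otimes u$ gives the required intertwiner, and one checks $X_2 U = X_1$ on the total set $\{Q_{\bm z}^*\eta\}$ using the explicit formula $X_j Q_{\bm z}^*\eta = P_{\bm z}^*\pi^{(j)*}_{\bm z}\eta$ from Theorem \ref{TH1}.

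The main obstacle I anticipate is the uniqueness step, and specifically the passage from "the two factorizations $K = \mathbb{S}_{\mathbb{D}^m}\pi^{(1)}\pi^{(1)*} = \mathbb{S}_{\mathbb{D}^m}\pi^{(2)}\pi^{(2)*}$ agree" to "the coefficient spaces are unitarily equivalent by a \emph{point-independent} unitary, compatibly with the embeddings" — this is exactly the rigidity statement that can fail for general $\clr$ (which is why the authors restrict to $\clr = H^2(\mathbb{D}^m)$ and $\clr = H^2_m$), and it rests on the doubly commuting isometry structure so that a Wold / Beurling–Lax type uniqueness applies coordinatewise. The cleanest route is probably to invoke the known uniqueness of the minimal isometric dilation in the doubly commuting case: one can push $\clh$ up to a dilation on $\bigoplus$ of vector Hardy spaces, use that doubly commuting isometric dilations of a fixed tuple are unique up to unitary equivalence over $\clh$, and then cut back down. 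Once $U$ is in hand the verification $X_2 U = X_1$ is routine. For multiplicity one this is \cite{DF}; the argument here is the natural vector-valued extension, and the doubly commuting property is doing all the real work.
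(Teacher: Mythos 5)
Your treatment of the equivalences is correct and is exactly the paper's route: the identity $\mathbb S_{\mathbb D^m}^{-1}(\mathbf M,\mathbf M^*)K=\mathbb S_{\mathbb D^m}^{-1}(\z,\w)K(\z,\w)$ gives the first equivalence, and Theorem \ref{TH3} combined with Theorem \ref{TH2} (with $k=\mathbb S_{\mathbb D^m}$, which is nonvanishing so $k^{-1}$ is defined) gives the second; this is Theorem \ref{dil}. For uniqueness, however, you take a genuinely different route from the paper. The paper does not localize: it defines $V$ directly on the dense set of vectors $\sum_{|\bm\alpha|\le N}M_{\z^{\bm\alpha}}X_1^*f_{\bm\alpha}$, sending them to the corresponding sums with $X_2^*$, and checks that inner products are preserved by writing $M_{\z^{\bm\beta}}^*M_{\z^{\bm\alpha}}=M_{\z^{\bm\gamma}}M_{\z^{\bm\mu}}^*$ (the point where the doubly commuting isometry structure enters), pulling the adjoint monomials through $X_i^*$ via the intertwining relation, and landing on an expression $\langle M_{\z^{\bm\mu}}^*f_{\bm\alpha},M_{\z^{\bm\gamma}}^*f_{\bm\beta}\rangle$ computed entirely in $\clh$, hence independent of $i$; minimality then gives density of domain and range. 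Your alternative -- localize both co-extensions as in Theorem \ref{TH1}, cancel $\mathbb S_{\mathbb D^m}$ to get $\pi^{(1)}_{\z}\pi^{(1)*}_{\w}=\pi^{(2)}_{\z}\pi^{(2)*}_{\w}$, and invoke uniqueness of the Kolmogorov decomposition to produce a constant unitary $u:\cle_1\raro\cle_2$ with $u\,\pi^{(1)*}_{\w}=\pi^{(2)*}_{\w}$, whence $I\otimes u$ intertwines -- is a legitimate and arguably cleaner strategy, and the final verification $X_1=X_2(I\otimes u)^*$ on the total set $\{Q_{\w}^*\eta\}$ is indeed routine.

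The gap is in the step you flag as the one "where minimality is used." To get a \emph{unitary} $u:\cle_1\raro\cle_2$ (rather than a unitary between the closed spans of $\{\pi^{(j)*}_{\w}\eta\}$), you must show that minimality forces $\cle_j=\overline{\operatorname{span}}\{\pi^{(j)*}_{\w}\eta:\w\in\mathbb D^m,\ \eta\in l^2_n\}$. This requires knowing that every reducing submodule of $H^2(\mathbb D^m)\otimes\cle_j$ has the form $H^2(\mathbb D^m)\otimes\cle'$, i.e., that the polydisk Hardy $m$-shift is irreducible so that the relevant commutant is $I\otimes\cll(\cle_j)$ -- a fact you never state, and the same fact is needed to justify your (and the paper's) assertion that any intertwining unitary module map is automatically of the form $I\otimes V_0$. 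This is precisely the rigidity that fails to be automatic for general $\clr$, which is why the paper restricts to $H^2(\mathbb D^m)$ and $H^2_m$; without it your argument only identifies the subspaces generated by $X_j^*\clh$ under the module action, not the full coefficient spaces. Your proposed fallback -- "invoke the known uniqueness of the minimal isometric dilation in the doubly commuting case" -- is essentially the statement to be proved and cannot be used as the engine of the proof. Supplying the irreducibility of $(M_{z_1},\dots,M_{z_m})$ on $H^2(\mathbb D^m)$ (e.g., via the rank-one projection $\prod_{\ell}(I-M_{z_\ell}M_{z_\ell}^*)$ onto the constants) would close the argument and make your route complete.
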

\NI\textsf{Proof.} Using Theorem \ref{dil},  every thing is proved except the uniqueness part. Suppose $X_i : H^2(\mathbb{D}^m) \otimes \cle_i \raro \clh$ are co-isometric module maps for $i = 1, 2$ so that the co-extensions are minimal. We must exhibit a unitary module map $$V : H^2(\mathbb{D}^m) \otimes \cle_1 \raro H^2(\mathbb{D}^m) \otimes \cle_2$$ so that $X_1 = X_2 V$. Such a map $V$ must have the form $I_{H^2(\mathbb{D}^m)} \otimes V_0$ for some unitary operator $V_0 : \cle_1 \raro \cle_2$, which will conclude the proof of uniqueness.

\NI Let $\bm{\alpha} = (\alpha_1, \ldots, \alpha_m)$ be a multi-index with $\alpha_i \in \mathbb{Z}_+, i = 1, \ldots, m$, and $|\bm{\alpha}| = \alpha_1 + \cdots + \alpha_m$. For $N \in \mathbb{N}$, let $\{f_{\bm{\alpha}}\}_{|\bm{\alpha}| \leq N}$ be vectors in $\clh$. We want to show that the map $$V(\sum_{|\bm{\alpha}| \leq N} M_{\z^{\bm{\alpha}}} X^*_1 f_{\bm{\alpha}}) = \sum_{|\bm{\alpha}| \leq N} M_{\z^{\bm{\alpha}}} X^*_2 f_{\bm{\alpha}},$$ extends to a unitary module map from $H^2(\mathbb{D}^m) \otimes \cle_1$ to $H^2(\mathbb{D}^m) \otimes \cle_2$. The first step is to show that this $V$ preserves the inner products, for which it is enough to show $$\langle  M_{\z^{\bm{\alpha}}} X^*_1 f_{\bm{\alpha}},  M_{\z^{\bm{\beta}}} X^*_1 f_{\bm{\beta}} \rangle = \langle  M_{\z^{\bm{\alpha}}} X^*_2 f_{\bm{\alpha}},  M_{\z^{\bm{\beta}}} X^*_2 f_{\bm{\beta}} \rangle,$$ for all $\bm{\alpha}$ and $\bm{\beta}$. We define multi-indices $\bm{\gamma}$ and $\bm{\mu}$ so that
\[ {\gamma}_i  = \left\{ \begin{array}{cc}
\alpha_i - \beta_i & \mbox{for}\, \alpha_i - \beta_i \geq 0  \\
0 & \mbox{for}\, \alpha_i - \beta_i < 0 \end{array} \right.  \quad \mbox{and} \quad {\mu}_i  = \left\{ \begin{array}{cc}
\beta_i - \alpha_i & \mbox{for}\, \beta_i - \alpha_i \geq 0  \\
0 & \mbox{for}\, \beta_i - \alpha_i < 0 \end{array} \right.\]
Note that $\alpha_i - \beta_i = \gamma_i - \mu_i, \gamma_i \geq 0$ and $\mu_i \geq 0$ and hence $$M^*_{\bm{z}^{\bm{\beta}}} M_{\bm{z}^{\bm{\alpha}}} = M^*_{\bm{z}^{\bm{\mu}}} M_{\bm{z}^{\bm{\gamma}}} = M_{\bm{z}^{\bm{\gamma}}} M^*_{\bm{z}^{\bm{\mu}}}.$$
Therefore, $$ \langle M_{\bm{z}^{\bm{\alpha}}} X^*_i f_{\bm{\alpha}}, M_{\bm{z}^{\bm{\beta}}} X^*_i f_{\bm{\beta}} \rangle = \langle M^*_{\bm{z}^{\bm{\beta}}} M_{\bm{z}^{\bm{\alpha}}} X^*_i f_{\bm{\alpha}}, X^*_i f_{\bm{\beta}} \rangle = \langle M^*_{\bm{z}^{\bm{\mu}}} X^*_i f_{\bm{\alpha}}, M^*_{\bm{z}^{\bm{\gamma}}} X^*_i f_{\bm{\beta}} \rangle,$$and, since $$M^*_{\bm{z}^{\bm{\mu}}} X^*_i f_{\bm{\alpha}} = X^*_i M^*_{\bm{z}^{\bm{\alpha}}} f_{\bm{\alpha}},$$ for all $\bm{\alpha}$, we have that $$ \langle M_{\bm{z}^{\bm{\alpha}}} X^*_i f_{\bm{\alpha}}, M_{\bm{z}^{\bm{\beta}}} X^*_i f_{\bm{\beta}} \rangle = \langle X^*_i M^*_{\bm{z}^{\bm{\mu}}} f_{\bm{\alpha}}, X^*_i M^*_{\bm{z}^{\bm{\gamma}}} f_{\bm{\beta}} \rangle = \langle M^*_{\bm{z}^{\bm{\mu}}} f_{\bm{\alpha}}, M^*_{\bm{z}^{\bm{\gamma}}} f_{\bm{\beta}} \rangle.$$Hence $V$ is well-defined and isometric. Moreover, since the span of vectors of the form $$\sum_{|\bm{\alpha}| \leq N} M_{\z^{\bm{\alpha}}} X^*_i f_{\bm{\alpha}}$$ is dense in $H^2(\mathbb{D}^m) \otimes \cle_i$ for $i=1, 2$, by minimality, $V$ is a unitary module map from $H^2 (\mathbb{D}^m) \otimes {\cle_1}$ onto $H^2 (\mathbb{D}^m) \otimes \cle_2$, which concludes the proof. \qed

The above proof will only work if the algebra is generated by functions for which module multiplication  defines doubly commuting isometric operators which happens for the Hardy module on the polydisk. For a more general quasi-free Hilbert module $\clr$, the maps $X^*_i$ identify anti-holomorphic sub-bundles of the bundle $E_{\clr} \otimes \cle_i$, where $E_{\clr}$ is the Hermitian holomorphic line bundle defined by $\clr$. To establish uniqueness, some how one must extend this identification to the full bundles. Equivalently, one has to identify the holomorphic quotient bundles of $E_{\clr} \otimes \cle_1$, and $E_{\clr} \otimes \cle_2$ and must some how lift it to the full bundles. At this point it is not even obvious that the dimensions of $\cle_1$ and $\cle_2$ or the ranks of the bundles are equal. This seems to be an interesting question. Using results on exact sequences of bundles (cf. \cite{GH}), one can establish uniqueness if
$\mbox{dim}\, \cle = \mbox{rank}\, E_{\clh} + 1$.

Another method of defining the isometry $Y : \clh \raro H^2(\mathbb{D}^m) \otimes {\mathcal  E}$, which yields the co-isometric extension of $\clh$, is to set $$Y K(\cdot, \w) \gamma  = \mathbb S_{\mathbb D^m} (\cdot,\w)V_{\w} \gamma, ~ \mbox{for}~ \w \in \Omega, \gamma \in l^2_n.$$ That $Y$ is well defined and isometric follows from the relation of the kernel functions for $\clh$ and $H^2(\mathbb{D}^m) \otimes \mathcal  E$. By uniqueness, then these two constructions of the isometric co-extension must coincide.

\section{An example}

We construct an example of a concrete quasi-free module over the algebra $\mathcal A(\mathbb D^2)$ that illustrates some of the subtlety in dilating to the Hardy module $H^2(\mathbb D^2)$. Consider the submodule ${\mathcal S}:=\{f(z,w) \oplus f(z,z) : f \in H^2(\mathbb{D}^2)\}$ of $H^2(\mathbb{D}^2) \oplus H^2(\mathbb{D})$ over the bi-disk algebra $A(\mathbb{D}^2)$.  The module multiplication on ${\mathcal S}$ is given by the natural action of the algebra $\mathcal A(\mathbb D^2)$ as follows:
$$
(\varphi \cdot f)(z,w) = \varphi(z,w) f(z,w) \oplus \varphi(z,z) f(z,z), \,\varphi \in \mathcal A(\mathbb D^2),\, f\in H^2(\mathbb D^2).
$$
The vector $1 \oplus 1$ generates $\mathcal S$ and the submodule $\mathcal S$ is quasi-free of rank $1$.

Let $\mathcal T$ be a joint $(M^*_{z_1}, M^*_{z_2})$-invariant subspace of the Hardy module $H^2(\mathbb{D}^2)$.
The module action is induced by the two operators $(P_{\mathcal T} M_{z_1}|_{\mathcal T}, P_{\mathcal T} M_{z_2}|_{\mathcal T})$.  Suppose ${\mathcal S}$  is unitarily equivalent to the module $\mathcal T$.  With respect to the orthogonal decomposition, $H^2(\mathbb{D}^2) = \mathcal T \oplus \mathcal T^{\perp}$, we have that
\begin{equation*}
M_{z_1} = \begin{bmatrix}T_1&0\\A_1&N_1\end{bmatrix} \quad \text{and} \quad M_{z_2} = \begin{bmatrix}T_2&0\\A_2&N_2\end{bmatrix}.
\end{equation*}
But
\begin{equation*}
M_{z_1}^* M_{z_1} = \begin{bmatrix}T^*_1&A^*_1\\0&N^*_1\end{bmatrix} \begin{bmatrix}T_1&0\\A_1&N_1\end{bmatrix} = \begin{bmatrix}T^*_1T_1 + A^*_1 A_1&A^*_1 N_1\\N^*_1A_1&N^*_1N_1\end{bmatrix} = \begin{bmatrix}I_{\clt}&0\\0&I_{\clt^{\perp}}\end{bmatrix},
\end{equation*}
and hence, $A_1^* A_1 = 0$ or, equivalently, $A_1 = 0$. Similarly, $A_2 = 0$. Consequently, $\mathcal T$ is a joint $(M_{z_1}, M_{z_2})$-reducing  subspace of $H^2(\mathbb{D}^2)$ which is a contradiction (as none of the reducing subspaces, $H^2(\mathbb{D}) \oplus \{0\}$ or $\{0\} \oplus H^2(\mathbb{D})$, of $H^2(\mathbb{D}^2)$ are unitarily equivalent to ${\mathcal S}$).
Note that $M_{z_1}$ and $M_{z_2}$ are isometries.  Hence, so are $T_1$ and $T_2$.
Therefore, we have proved the following.

\begin{Proposition}
The Hilbert module $\cls$ does not have any resolution $$\cdots \longrightarrow H^2(\mathbb{D}^2) \stackrel{X} \longrightarrow \cls \longrightarrow 0$$with $X$ a co-isometric module map.
\end{Proposition}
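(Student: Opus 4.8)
The plan is to suppose, for contradiction, that such a co-isometric module map $X : H^2(\mathbb D^2) \to \cls$ exists and then derive the same structural obstruction that the text has already set up in the preceding paragraph. First I would observe that since $\cls$ is isometrically a submodule of $H^2(\mathbb D^2) \oplus H^2(\mathbb D)$ on which the coordinate operators act isometrically (the map $1 \oplus 1 \mapsto f \oplus f$ embeds $H^2(\mathbb D^2)$ isometrically), $\cls$ is itself a Hilbert module on which $M_{z_1}^{\cls}$ and $M_{z_2}^{\cls}$ are isometries. A co-isometric module map $X : H^2(\mathbb D^2) \to \cls$ means $X$ intertwines the coordinate multiplications and $XX^* = I_{\cls}$, so $X^*$ is an isometric module map embedding $\cls$ as a submodule $\clt := X^*\cls \subseteq H^2(\mathbb D^2)$; moreover this $\clt$ is joint $(M_{z_1}^*, M_{z_2}^*)$-invariant because $X^*$ intertwines adjoints. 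Thus $\cls$ is unitarily equivalent, as a module, to the compression module $\clt$ with action $(P_\clt M_{z_1}|_\clt, P_\clt M_{z_2}|_\clt)$.

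Next I would run exactly the argument already displayed in the text. Write $M_{z_i}$ on $H^2(\mathbb D^2) = \clt \oplus \clt^\perp$ as the lower-triangular $2\times 2$ operator matrix with corner $T_i = P_\clt M_{z_i}|_\clt$. Since $\cls$ (hence the compression module) has isometric coordinate operators, each $T_i$ is an isometry. Comparing $M_{z_i}^* M_{z_i} = I$ entrywise forces the off-diagonal entry $A_i$ to vanish: $T_i^* T_i + A_i^* A_i = I$ together with $T_i^* T_i = I$ gives $A_i^* A_i = 0$, so $A_i = 0$ for $i = 1, 2$. Therefore $\clt$ is a joint $(M_{z_1}, M_{z_2})$-reducing subspace of $H^2(\mathbb D^2)$.

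Finally I would invoke the classification of reducing subspaces of the bidisk shift: the only nontrivial joint reducing subspaces of $H^2(\mathbb D^2)$ are $H^2(\mathbb D) \otimes \{0\}$ and $\{0\} \otimes H^2(\mathbb D)$ (this is a standard consequence of the fact that $M_{z_1}, M_{z_2}$ generate an irreducible-enough pair, or directly from Fourier analysis on $\mathbb T^2$), and $\clt$ cannot be $\{0\}$ or all of $H^2(\mathbb D^2)$ since $\cls$ is neither zero nor unitarily equivalent to $H^2(\mathbb D^2)$. On each of the two candidate reducing subspaces one coordinate operator acts as zero, whereas on $\cls$ both coordinate operators are injective (indeed isometric), so none of them is unitarily equivalent to $\cls$ as a module. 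This contradiction shows no such $X$ exists. I expect the only real subtlety is the bookkeeping that a co-isometric \emph{module} map really does produce a jointly $M^*$-invariant subspace with the stated compressed action, together with citing the reducing-subspace classification; everything else is the entrywise matrix computation already carried out above, so the argument is essentially complete once it is organized as a proof by contradiction.
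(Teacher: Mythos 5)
Your argument is the paper's argument: use the co-isometric module map $X$ to realize $\mathcal{S}$, up to unitary module equivalence, as the compression of $(M_{z_1},M_{z_2})$ to the jointly $(M_{z_1}^*,M_{z_2}^*)$-invariant subspace $\mathcal{T}=X^*\mathcal{S}\subseteq H^2(\mathbb{D}^2)$; observe that the coordinate operators act isometrically on $\mathcal{S}$; and use the block identity $T_i^*T_i+A_i^*A_i=I$ to force $A_i=0$, so that $\mathcal{T}$ reduces the pair. Your explicit verification that $X^*$ produces a co-invariant subspace with the stated compressed action is a point the paper leaves implicit, and it is correct (note $P_{\mathcal{T}}=X^*X$). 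Two blemishes remain in your last step, both of which echo looseness in the paper's own parenthetical. First, the classification you quote is wrong: the pair $(M_{z_1},M_{z_2})$ on $H^2(\mathbb{D}^2)$ is \emph{irreducible} --- its commutant consists of multiplications by $H^\infty(\mathbb{D}^2)$ functions and so contains no nontrivial projections --- and the spaces ``$H^2(\mathbb{D})\otimes\{0\}$'' you name are not subspaces of $H^2(\mathbb{D}^2)$ at all. This only shortens your case analysis: $\mathcal{T}$ must be $\{0\}$ or all of $H^2(\mathbb{D}^2)$. Second, the case that then carries all the weight, $\mathcal{T}=H^2(\mathbb{D}^2)$, is dismissed by bare assertion; you do need a reason why $\mathcal{S}$ is not unitarily equivalent to $H^2(\mathbb{D}^2)$ as a module. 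A one-line reason: the coordinate isometries on $H^2(\mathbb{D}^2)$ doubly commute, whereas on $\mathcal{S}$ they do not --- with $V_i$ the coordinate operators on $\mathcal{S}$ and $e=1\oplus 1$ one computes $V_1^*e=0$, hence $V_2V_1^*e=0$, while $V_1^*V_2e=\tfrac12 e\neq 0$. (Alternatively, compare kernel functions, as the paper does later in the section.) With that supplied your proof is complete and coincides with the paper's.
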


There is a useful alternative description of the Hilbert module ${\mathcal S}$ discussed above based on Proposition \ref{main}.  First, we observe that the linear subspace $\{f(z,z):f\in H^2(\mathbb D^2)\} \subseteq \clo(\bigtriangleup)$, the space of holomorphic functions on $\bigtriangleup$, is not isomorphic to the Hardy module $H^2(\mathbb D)$ but rather to the Bergman module $L^2_a(\mathbb{D})$. Let $H_{1/2}(\mathbb D^2)$ be the Hilbert space of holomorphic functions on
the bi-disc $\mathbb D^2$ determined by the positive definite kernel
$$B_{1/2}(\z,\w) = \frac{1}{(1-z_1\bar{w}_1)^{\frac{1}{2}}} \,  \frac{1}{(1-z_2\bar{w}_2)^{\frac{1}{2}}},\,\z=(z_1,z_2),\,\w=(w_1,w_2) \in \mathbb D^2.$$
We recall that the restriction map ${\rm res}: H_{1/2}(\mathbb D^2) \to H^2(\mathbb D)$ defined by the formula $f \mapsto f_{|{\rm res~}\bigtriangleup}$ (that is, restriction to the diagonal $\bigtriangleup$), is a co-isometry.  The orthocompliment $\mathcal Q$ of the kernel of this map in  $H_{1/2}(\mathbb D^2)$, considered as a quotient module, is therefore isometrically isomorphic to the Hardy module $H^2(\mathbb D)$.  Let $K_\mathcal Q$ denote the reproducing kernel for the module $\mathcal Q$. Then
$$K_{\mathcal Q}(\z, \w) = K_{H_{1/2}(\mathbb{D}^2)}(\z, \w) - (z_1 - z_2) \cdot \chi (\z, \w) \cdot (\bar{w_1} - \bar{w_2}),$$
for some positive definite kernel $\chi$ on the bi-disk $\mathbb{D}^2$. By a result of Aronszajn \cite{aron}, the kernel function for $\mathcal S$ is given by
$$K_{\mathcal S}(\z, \w) = K_{H^2(\mathbb{D}^2)} (\z, \w) + K_{H_{1/2}(\mathbb{D}^2)}(\z, \w) - (z_1 - z_2) \cdot \chi (\z, \w)\cdot (\bar{w_1} - \bar{w_2}).$$
This fact requires an identification of the space associated with the sum of two kernel functions and the space $\cls$ constructed above.

Now let us consider the question of whether $\mathcal S$ possesses an isometric co-extension to $H^2_{\mathcal  E}(\mathbb{D}^2)$ for some Hilbert space $\mathcal  E$. By Theorem \ref{cor1}, this is equivalent to the positive definiteness of
$$H(\z, \w) =  (1 - z_1 \bar{w}_1) (1 - z_2 \bar{w}_2) K_{\mathcal S}(\z, \w).$$
However, this is not the case since the restriction of $H(\z, \w)$ to the diagonal $\bigtriangleup$ is not positive. More precisely,
$$H(z,z;w,w) = (1 - z \bar{w})^2\{(1 - z \bar{w})^{-2} + (1 - z \bar{w})^{-1}\} = 1 + (1 - z \bar{w}) = 2 - z \bar{w};$$ which is not positive definite. Therefore, ${\mathcal S}$ is a contractive quasi-free Hilbert module over the bi-disk algebra but the kernel function $K_{\mathcal S}$ does not admit the S\"{z}ego kernel as a factor. Thus this provides another proof that the module $\mathcal S$ does not possess a $H^2(\mathbb D^2)$ co-isometric extension.

\section{Other Kernel functions in Several Variables}

The results of Section 3 apply to more than the case of the Hardy module which we stated in Theorem \ref{dil}. More precisely, we have

\begin{Theorem}\label{allkernel} Let $\clm$ be a Hilbert module over $\cla(\Omega)$ for $\Omega \subseteq \mathbb{C}^m$ (or $\mathbb{C}[\z]$ for $\Omega = {\mathrm B}^m$ or $\mathbb{D}^m$) having the kernel function $K(\z, \w) = k(\z, \w) I_{\mathcal  E}$, where $k(\z, \w)$ is a scalar kernel function and $\mathcal  E$ is a Hilbert space. Let $\clh$ be a Hilbert module over the same algebra with kernel function $K_{\clh}(\z, \w)$ such that $k^{-1}(\bm{M}, \bm{M}^*) \geq 0$, where $\bm{M}$ is the coordinate multipliers on $\clh$. Then $\clh$ can be realized as a quotient module of $\clm \otimes \mathcal Q$ over the same algebra for some Hilbert space $\mathcal Q$, and conversely. Hence $\clh$ has an isometric co-extension to $\clm \otimes \mathcal Q$ for some Hilbert space $\mathcal Q$ if and only if $k^{-1}(\bm{M}, \bm{M}^*) \geq 0$ if and only if $k^{-1} K_{\clh} \geq 0$.
\end{Theorem}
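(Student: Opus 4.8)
The plan is to deduce Theorem~\ref{allkernel} directly from the machinery already assembled, exactly mirroring the passage from Theorem~\ref{TH3} to Theorem~\ref{dil} but with $\mathbb S_{\mathbb D^m}$ replaced by the scalar kernel $k$. First I would record the hereditary-calculus computation analogous to the one displayed before Theorem~\ref{TH3}: if $k^{-1}(\z,\w) = \sum_{I,J} a_{I,J}\z^I\bar\w^J$ is (the reciprocal of a kernel and) a polynomial, or more generally a convergent power series, in the variables $\z,\bar\w$, then for the coordinate multipliers $\bm M$ on $\clh$ one has
\[
\big(k^{-1}(\bm M,\bm M^*)\big)K_{\clh}(\z,\w) \;=\; k^{-1}(\z,\w)\,K_{\clh}(\z,\w),
\]
because $\bm M^I {\bm M^*}^J K_{\clh}(\cdot,\w) = \z^I\bar\w^J K_{\clh}(\cdot,\w)$ by the eigenvector property \eqref{eigenspace}. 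This yields the equivalence ``$k^{-1}(\bm M,\bm M^*)\ge 0$ iff $k^{-1}K_{\clh}$ is a non-negative definite kernel'' — the analogue of the remark preceding Theorem~\ref{TH3}, and this already establishes the last ``if and only if'' in the statement.

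Second, I would invoke Theorem~\ref{TH2} with $\clr$ the reproducing kernel Hilbert module whose scalar kernel is $k$ (so that $\clr\otimes\mathcal E$ has kernel $k(\z,\w)I_{\mathcal E}$, matching the hypothesis on $\clm$). By the ``moreover'' clause of Theorem~\ref{TH2}, $\clh$ has an isometric co-extension to $\clr\otimes\mathcal Q$ for some Hilbert space $\mathcal Q$ if and only if $k^{-1}K_{\clh}$ is a positive definite kernel, which by the previous paragraph is the same as $k^{-1}(\bm M,\bm M^*)\ge 0$. The explicit co-extension space is produced just as in the proof of Theorem~\ref{TH2}: set $\mathcal Q = \clh(\widetilde K)$ for $\widetilde K = k^{-1}K_{\clh}$, take $\pi_{\z}$ to be the evaluation operator of $\clh(\widetilde K)$, and apply Theorem~\ref{TH1}. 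The ``quotient module'' phrasing in the statement is then literally the content of Theorem~\ref{TH1}: an isometric co-extension to $\clm\otimes\mathcal Q$ is the same thing as realizing $\clh$ as $(\clm\otimes\mathcal Q)/\cls$.

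Third, I would note that the converse direction — if $\clh$ is a quotient of $\clm\otimes\mathcal Q$ then $k^{-1}(\bm M,\bm M^*)\ge 0$ — is immediate from the necessity half of Theorem~\ref{TH1} (or Theorem~\ref{TH2}): the factorization $K_{\clh}(\z,\w) = k(\z,\w)\pi_{\z}\pi_{\w}^*$ forces $k^{-1}K_{\clh} = \pi_{\z}\pi_{\w}^*$ to be non-negative definite, hence $k^{-1}(\bm M,\bm M^*)\ge 0$.

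The one genuine point requiring care — the main obstacle — is the meaning of $k^{-1}(\bm M,\bm M^*)$ via the hereditary functional calculus when $k^{-1}$ is not a polynomial. For $\Omega = \mathbb D^m$ or $\mathrm B^m$ and the classical choices of $k$ (Szeg\H o, Bergman, weighted Bergman, Drury--Arveson), $k^{-1}$ is either a polynomial or has an expansion $\sum_{I,J} a_{I,J}\z^I\bar\w^J$ converging appropriately, and one must check that the operator $\sum_{I,J} a_{I,J}\bm M^I{\bm M^*}^J$ is well defined (e.g. the sum converges strongly on a dense set, or $k^{-1}$ is a finite sum) so that the computation above is legitimate and the sign of the resulting operator is what governs the factorization; this is exactly the hypothesis one needs and is why the statement restricts to $\Omega = \mathrm B^m$ or $\mathbb D^m$ in the $\mathbb C[\z]$ case. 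Everything else is a transcription of the arguments already given for the Hardy module.
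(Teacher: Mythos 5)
Your proposal is correct and matches the paper's intended argument: the paper establishes Theorem \ref{allkernel} only by remarking that the Section 3 machinery (the hereditary-calculus identity preceding Theorem \ref{TH3}, combined with Theorems \ref{TH1} and \ref{TH2}) carries over verbatim with $\mathbb S_{\mathbb D^m}$ replaced by the scalar kernel $k$, which is exactly the transcription you carry out. You also correctly isolate the one genuine issue --- giving meaning to $k^{-1}(\bm{M}, \bm{M}^*)$ when $k^{-1}$ is not a polynomial --- which is precisely the caveat the paper records in the remark immediately following the theorem via the $\frac{1}{k}$-calculus of Arazy and Englis.
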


We note that the operator positivity assumption in the above theorem includes implicitly the additional hypothesis that one can define a functional calculus so that $k^{-1} (\bm{M}, \bm{M}^*)$ makes sense for the kernel function $k$. It was pointed out in the paper by Arazy and Englis \cite{AE} that for many reproducing kernel Hilbert spaces, one can define such a $\frac{1}{k}$-calculus. Thus our result is true to that extent. We now provide some examples to which these results apply.

The kernel function for the Bergman module, $L^2_a(\mathbb{D})$, is ${\mathrm B}_1(z, w) = (1-z \bar{w})^{-2}$. Therefore, the kernel function for $L^2_a(\mathbb{D}^m)$ is the product $${\mathrm B}_m (\z, \w) = \Pi_{i=1}^{m} (1 - z_i \bar{w}_i)^{-2}, \quad  \z, \w \in \mathbb{D}^m.$$
Applying Theorem \ref{allkernel} we obtain the following result

\begin{Corollary}
If $\clh$ is a contractive reproducing kernel Hilbert module over $\cla(\mathbb{D}^m)$, then $\clh$ has an $L^2_a(\mathbb{D}^m) \otimes \cle$ isometric co-extension if and only if $${\mathrm B}_{\mathbb{D}^m}^{-1}(\bm{M}, \bm{M}^*) \geq 0,$$or, equivalently, if and only if $${\mathrm B}_{\mathbb{D}^m}^{-1} K \geq 0,$$ where $K$ is the kernel function for $\clh$.
\end{Corollary}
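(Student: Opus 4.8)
The plan is to derive this Corollary as a direct specialization of Theorem \ref{allkernel}, taking the ``model'' kernel to be the Bergman kernel on the polydisk. First I would set $k(\z, \w) = {\mathrm B}_m(\z, \w) = \prod_{i=1}^m (1 - z_i \bar{w}_i)^{-2}$ and $\clm = L^2_a(\mathbb{D}^m) \otimes \cle$, which is a Hilbert module over $\cla(\mathbb{D}^m)$ with kernel function $k(\z,\w) I_{\cle}$, so the hypotheses of Theorem \ref{allkernel} on $\clm$ are met. To invoke that theorem one needs the $\tfrac1k$-functional calculus to make sense for $\clh$; here $k^{-1}(\z, \w) = \prod_{i=1}^m (1 - z_i \bar w_i)^2$ is a polynomial in $\z$ and $\bar\w$, so ${\mathrm B}_{\mathbb{D}^m}^{-1}(\bm M, \bm M^*)$ is simply a finite sum $\sum_{I,J} a_{I,J} \bm M^I {\bm M^*}^J$ defined via Agler's hereditary functional calculus exactly as in Section 3, with no convergence issues. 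Thus the hypothesis $k^{-1}(\bm M, \bm M^*) \geq 0$ is unambiguous.

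Next I would record that, by the same elementary computation used just before Theorem \ref{TH3} (applying the hereditary calculus to the eigenvector equation $M_\ell^* K(\cdot, \w) = \bar w_\ell K(\cdot, \w)$), one gets
\[
\big({\mathrm B}_{\mathbb{D}^m}^{-1}(\bm M, \bm M^*)\big) K(\z, \w) = {\mathrm B}_{\mathbb{D}^m}^{-1}(\z, \w) K(\z, \w),
\]
so that ${\mathrm B}_{\mathbb{D}^m}^{-1}(\bm M, \bm M^*) \geq 0$ if and only if ${\mathrm B}_{\mathbb{D}^m}^{-1} K$ is a non-negative definite kernel; this is the operator-theoretic-to-kernel translation that gives the ``equivalently'' clause in the statement. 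Then Theorem \ref{allkernel}, with $k = {\mathrm B}_{\mathbb{D}^m}$, yields directly that $\clh$ has an $L^2_a(\mathbb{D}^m) \otimes \cle$ isometric co-extension (equivalently, is a quotient module of $L^2_a(\mathbb{D}^m) \otimes \mathcal Q$ for some Hilbert space $\mathcal Q$) precisely when ${\mathrm B}_{\mathbb{D}^m}^{-1} K \geq 0$. One should remark that the hypothesis ``$\clh$ contractive over $\cla(\mathbb{D}^m)$'' is what guarantees $K(\z,\w) = (1 - z_\ell \bar w_\ell)^{-1} Q_\ell(\z,\w)$ for positive definite $Q_\ell$, which is consistent with (though weaker than) the Bergman factorization condition, so the Corollary genuinely refines the contractivity.

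The only real subtlety — and the step I would flag as the main obstacle — is verifying that the $\tfrac1k$-calculus machinery legitimately applies and that the factorization $K = {\mathrm B}_{\mathbb{D}^m} \tilde K$ produces a kernel $\tilde K$ for which the associated reproducing kernel Hilbert space $\mathcal Q = \clh(\tilde K)$ can serve as the coefficient space, exactly as in the proof of Theorem \ref{TH2}; since ${\mathrm B}_{\mathbb{D}^m}^{-1}$ is polynomial this is unproblematic, so in the end the Corollary is essentially a restatement of Theorem \ref{allkernel} specialized to the Bergman kernel, and the proof is short. I would simply write: ``Apply Theorem \ref{allkernel} with $k = {\mathrm B}_{\mathbb{D}^m}$ and $\clm = L^2_a(\mathbb{D}^m) \otimes \cle$; the equivalence of the operator positivity ${\mathrm B}_{\mathbb{D}^m}^{-1}(\bm M, \bm M^*) \geq 0$ and the kernel positivity ${\mathrm B}_{\mathbb{D}^m}^{-1} K \geq 0$ follows from the hereditary functional calculus computation of Section 3, since ${\mathrm B}_{\mathbb{D}^m}^{-1}$ is a polynomial in $\z$ and $\bar\w$.''
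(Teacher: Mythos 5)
Your proposal is correct and follows the paper's own route exactly: the paper obtains this Corollary by directly applying Theorem \ref{allkernel} with $k = {\mathrm B}_{\mathbb{D}^m}$, and the equivalence of operator positivity with kernel positivity is the same hereditary-functional-calculus computation carried out before Theorem \ref{TH3}. Your additional observation that ${\mathrm B}_{\mathbb{D}^m}^{-1}$ is a polynomial in $\z$ and $\bar{\w}$, so the $\tfrac{1}{k}$-calculus hypothesis of Theorem \ref{allkernel} is automatically satisfied, is a worthwhile clarification of a point the paper only addresses in a general remark.
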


Note that if ${\mathrm B}_{\mathbb{D}^m}^{-1} K \geq 0$, it follows that $\mathbb{S}_{\mathbb{D}^m}^{-1} K \geq 0$. Hence, if $\clh$ has an isometric Bergman space co-extension, it also has a Hardy space one, a result which can be proved directly.

Further, note that one could state similar results for the weighted Bergman spaces on $\mathbb{D}^m$. We omit the details.

We now consider examples on the unit ball. Let $\B^m:=\{\z\in \C^m: |z_1|^2 +\cdots + |z_m|^2 <1 \}$ be the Euclidean unit ball
and $K$ be a positive definite kernel on $\B^m$.  Let
$T_\ell$, $1\leq \ell \leq m$, be the operator defined on the
normed linear space $\mathcal H^0 = \bigvee \{K(\cdot, \w): \w \in \mathbb B^m\}$ by the formula $T_\ell K(\cdot, \w) = \bar{w}_\ell K(\cdot,\w)$.
The following extension of Lemma \ref{bdd} gives a criterion for the contractivity of the operator $\sum_{\ell=1}^m T_\ell^* T_\ell \leq I$.
\begin{Corollary}\label{DAkernel}
Let $K$ be a positive definite kernel function on the unit ball
$\B^m$. The commuting $m$-tuple $\mathbf T=(T_1, \ldots , T_m)$ of linear maps on
$\mathcal H^0 \subseteq \mathcal H_K$ satisfies the contractivity condition
$\sum_{\ell=1}^m T_\ell^* T_\ell \leq I$ if and only if the function
$(1 - \inner{\z}{\w})K(\z,\w)$ is positive definite.
\end{Corollary}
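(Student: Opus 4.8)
\emph{Sketch of the argument I would give.} The plan is to reproduce, essentially verbatim, the computation in the proof of Lemma \ref{bdd}, the only modification being that the $m$ separate one‑variable positivity conditions are replaced by the single spherical condition and that the resulting quadratic forms are summed over $\ell$. The whole corollary then reduces to one algebraic identity on $\mathcal H^0$.

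First I would fix points $\w^{(1)},\ldots,\w^{(k)}$ in $\B^m$ and vectors $x_1,\ldots,x_k$ (scalars, or elements of $l^2_n$ in the operator‑valued setting of Lemma \ref{bdd}), and consider the generic element $f=\sum_{i=1}^k K(\cdot,\w^{(i)})x_i$ of $\mathcal H^0$. Using $T_\ell K(\cdot,\w^{(i)})x_i=\bar w_\ell^{(i)}K(\cdot,\w^{(i)})x_i$ together with the defining inner product \eqref{nndinner}, exactly as in Lemma \ref{bdd} one obtains
$$\|T_\ell f\|^2=\sum_{i,j=1}^k \bar w_\ell^{(i)} w_\ell^{(j)}\,\inner{K(\w^{(j)},\w^{(i)})x_i}{x_j}.$$
Summing over $\ell=1,\ldots,m$ and interchanging the order of summation gives
$$\sum_{\ell=1}^m\|T_\ell f\|^2=\sum_{i,j=1}^k \inner{\w^{(j)}}{\w^{(i)}}\,\inner{K(\w^{(j)},\w^{(i)})x_i}{x_j},$$
while $\|f\|^2=\sum_{i,j=1}^k\inner{K(\w^{(j)},\w^{(i)})x_i}{x_j}$, so that
$$\|f\|^2-\sum_{\ell=1}^m\|T_\ell f\|^2=\sum_{i,j=1}^k\inner{\big(1-\inner{\w^{(j)}}{\w^{(i)}}\big)K(\w^{(j)},\w^{(i)})x_i}{x_j}.$$
After the harmless relabelling $i\leftrightarrow j$, the right‑hand side is exactly the quadratic form appearing in the n.n.d. condition \eqref{existence reprod} for the kernel $(1-\inner{\z}{\w})K(\z,\w)$.

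From this identity both implications drop out. If $(1-\inner{\z}{\w})K(\z,\w)$ is non‑negative definite, the identity yields $\sum_\ell\|T_\ell f\|^2\le\|f\|^2$ for every $f\in\mathcal H^0$; in particular each $T_\ell$ is a contraction on the dense subspace $\mathcal H^0$, hence extends to a contraction on $\mathcal H_K$, $\sum_\ell T_\ell^*T_\ell$ is a well‑defined bounded operator, and $\sum_\ell\|T_\ell f\|^2\le\|f\|^2$ passes to all of $\mathcal H_K$ by density, i.e. $\sum_{\ell=1}^m T_\ell^*T_\ell\le I$. Conversely, if $\sum_{\ell=1}^m T_\ell^*T_\ell\le I$ (so in particular the $T_\ell$ are bounded), then restricting $\inner{(I-\sum_\ell T_\ell^*T_\ell)f}{f}\ge0$ to $f\in\mathcal H^0$ and reading the identity backwards gives non‑negative definiteness of $(1-\inner{\z}{\w})K(\z,\w)$. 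I do not expect any genuine obstacle here: the proof is a one‑line adaptation of Lemma \ref{bdd}, and the only point that needs a word of care is the automatic boundedness of the $T_\ell$ in the ``if'' direction, handled exactly as in that lemma. (One may add, as in Lemma \ref{bdd}, that $\big\|\sum_\ell T_\ell^*T_\ell\big\|$ is the least $c\ge0$ for which $(c-\inner{\z}{\w})K(\z,\w)$ is non‑negative definite, the stated corollary being the case $c=1$.)
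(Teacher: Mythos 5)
Your proposal is correct and is essentially the paper's own proof: the authors likewise reduce the corollary to the computation of $\|T_\ell f\|^2$ on $f=\sum_i K(\cdot,\w^{(i)})x_i$ exactly as in Lemma \ref{bdd} and then sum the resulting quadratic forms over $\ell$, so that $\|f\|^2-\sum_\ell\|T_\ell f\|^2$ becomes the quadratic form of the kernel $(1-\inner{\z}{\w})K(\z,\w)$. Your additional remarks on the automatic boundedness of the $T_\ell$ and on the optimal constant are consistent with, and no more than an elaboration of, the argument the paper gives.
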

\begin{proof}
We note that $\sum_{\ell=1}^m T_\ell^* T_\ell$ is a Hermitian operator.
Therefore, it is enough to compute
$$\sup\{\inner{\sum_{\ell=1}^m T_\ell^* T_\ell \eta}{\eta}:
\eta = \sum_{i=1}^n K(\cdot, \w_i) x_i\},\, \w_i \in \mathbb B^m,\, n \in \mathbb N.$$
But the computation for each term of the summand, is the same as the one in the proof of Lemma \ref{bdd}.  Adding all of these inequalities completes the proof.
\end{proof}

Suppose $\mathcal M$ is a Hilbert module over the ball algebra $\mathcal A(\mathbb B^m)$ and let $K_\mathcal M$ be its reproducing kernel. The operators $M_\ell$ of multiplication by the coordinate functions on $\mathcal M$ satisfy the inequality
$\sum_{\ell=1}^m M_\ell^* M_\ell \leq I$ if and only if $K_\mathcal M (\z,\w) = (1 - \inner{\z}{\w})^{-1}K(\z,\w)$ for some positive definite kernel $K$ on the ball $\mathbb B^m$.
The Hilbert module over the ball algebra $\mathcal A(\mathbb B^m)$ corresponding to the kernel $(1-\inner{\z}{{\w}})^{-1}$, $\z,\w \in \mathbb B^m$, is the Drury-Arveson space $H^2_m$. For this module, the operator inequality of the lemma is evident.  Let $\mathcal N$ be the Hilbert space corresponding to the positive definite kernel $K$ which appears in the factorization of $K_\mathcal M$.
Now, assume that $\sum_{\ell=1}^m M_\ell M_\ell^* \leq I$.
It then follows from  an extension of Theorem \ref{allkernel} that the Hilbert module $\mathcal M$ admits an isometric co-extension to the Drury-Arveson space  $H^2_m \otimes \mathcal N \equiv H^2_m(\mathcal N)$. Thus we have obtained a special case of Arveson's dilation result (cf. \cite{Ar}):
\begin{Proposition}
Let $\mathcal M$ be a quasi-free Hilbert module over the ball algebra $\mathcal A(\mathbb B^m)$. Suppose that the $m$-tuple of operators defined to be multiplication by the coordinate functions on $\mathcal M$ satisfies the operator inequality $\sum_{\ell=1}^m M_\ell M_\ell^* \leq I$.  Then $\mathcal M$ can be realized as a quotient module of $H^2_m \otimes \mathcal N$
for some Hilbert space $\mathcal N$.
\end{Proposition}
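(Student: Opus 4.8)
The plan is to recognize this as the ball analogue of the implication Theorem~\ref{TH3}~$\Rightarrow$~Theorem~\ref{dil} and to push it through the general Theorem~\ref{allkernel}, with $k$ the Drury--Arveson kernel. First I would record that the reproducing kernel of $H^2_m$ is $k(\z,\w)=\bigl(1-\inner{\z}{\w}\bigr)^{-1}=\sum_{n\ge 0}\inner{\z}{\w}^{n}$, which is positive definite on $\B^m$, and that its reciprocal $k^{-1}(\z,\w)=1-\inner{\z}{\w}=1-\sum_{\ell=1}^{m}z_\ell\bar w_\ell$ is an honest polynomial in the $2m$ variables $(\z,\w)$; in particular the $\tfrac1k$-hereditary functional calculus is unambiguously defined for any commuting $m$-tuple, so no convergence issue or appeal to the Arazy--Englis \cite{AE} discussion is needed in this instance.

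The second step is the computation carried out exactly as in the display preceding Theorem~\ref{TH3}. Writing $k^{-1}(\z,\w)=\sum_{I,J}a_{I,J}\z^I\bar\w^J$ with $a_{0,0}=1$ and $a_{e_\ell,e_\ell}=-1$, the hereditary calculus yields $k^{-1}(\mathbf M,\mathbf M^*)=I_{\clm}-\sum_{\ell=1}^m M_\ell M_\ell^*$, and using $M_\ell^*K_{\clm}(\cdot,\w)=\bar w_\ell K_{\clm}(\cdot,\w)$ one obtains $k^{-1}(\mathbf M,\mathbf M^*)K_{\clm}(\cdot,\w)=k^{-1}(\z,\w)K_{\clm}(\z,\w)$. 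Hence the hypothesis $\sum_{\ell=1}^m M_\ell M_\ell^*\le I$ is precisely the operator positivity $k^{-1}(\mathbf M,\mathbf M^*)\ge 0$, and it is equivalent to the statement that $\bigl(1-\inner{\z}{\w}\bigr)K_{\clm}(\z,\w)$ is a non-negative definite $\cll(l^2_n)$-valued kernel on $\B^m$, i.e. to a factorization $K_{\clm}(\z,\w)=k(\z,\w)\,\widetilde K(\z,\w)$ with $\widetilde K$ positive definite (this equivalence is also Corollary~\ref{DAkernel}, since the $T_\ell$ there coincide with $M_\ell^*$).

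With the factorization in hand I would invoke Theorem~\ref{allkernel}, equivalently the chain Theorem~\ref{TH1}$\to$Theorem~\ref{TH2}, with $\clr=H^2_m$ and $\clh=\clm$: taking $\mathcal N=\clh(\widetilde K)$, the reproducing kernel Hilbert space of $\widetilde K$, and $\pi_{\z}=e_{\z}\in\cll(\mathcal N,l^2_n)$ its evaluation operators, so that $\widetilde K(\z,\w)=\pi_{\z}\pi_{\w}^*$, the map $X\colon\clm\to H^2_m\otimes\mathcal N$ determined by $XQ_{\z}^*\eta=P_{\z}^*\pi_{\z}^*\eta$ is a well-defined isometric module map (the inner-product verification is the one in the proof of Theorem~\ref{TH1}), and $Y=X^*$ is a co-isometric module map $H^2_m\otimes\mathcal N\to\clm$. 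Then $\clm\cong(H^2_m\otimes\mathcal N)/\ker Y$ exhibits $\clm$ as a quotient module of $H^2_m\otimes\mathcal N$, which is the assertion; tensoring by $\mathcal N$ and using Proposition~\ref{main} gives, when $\mathcal N$ is itself a $\mathbb C[\z]$-module, the concrete ``reduced tensor product'' description of the co-extension space.

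The point that requires care, and presumably the reason for the phrase ``an extension of Theorem~\ref{allkernel}'' in the text, is the acting algebra: $H^2_m$ supports a contractive action only of $\mathbb C[\z]$ and not of $\cla(\B^m)$ (its multiplier algebra is strictly smaller than $H^\infty(\B^m)$ and is not a sup-norm completion), so the whole argument must be run with $\cla(\B^m)$ replaced by $\mathbb C[\z]$, regarding the quasi-free module $\clm$ as a $\mathbb C[\z]$-module by restriction; the parenthetical ``$\mathbb C[\z]$ for $\Omega={\mathrm B}^m$'' clauses in Theorems~\ref{TH1}, \ref{TH2} and \ref{allkernel} are exactly what make this legitimate. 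Since the statement asks only for the \emph{existence} of the co-isometric module map $Y$, the abstract construction from Theorem~\ref{TH2} already suffices, and everything else is a routine specialization.
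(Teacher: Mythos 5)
Your argument is correct and follows essentially the same route as the paper: identify the hypothesis $\sum_{\ell}M_\ell M_\ell^*\le I$ with the non-negative definiteness of $(1-\inner{\z}{\w})K_{\clm}(\z,\w)$ (Corollary \ref{DAkernel}, which you rederive via the hereditary calculus for the polynomial $1-\inner{\z}{\w}$), and then feed the resulting factorization $K_{\clm}=(1-\inner{\z}{\w})^{-1}\widetilde K$ into Theorem \ref{TH2}/\ref{allkernel} with $\clr=H^2_m$ and $\mathcal N=\clh(\widetilde K)$. Your closing remark about working over $\mathbb{C}[\z]$ rather than $\cla(\B^m)$ correctly pinpoints why the paper speaks of an ``extension'' of Theorem \ref{allkernel}.
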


Just as we did for the Bergman module for the polydisk, we can also consider possible dilations on the ball under the assumption that $(1 - \langle \z, \w \rangle)^k K$ is positive definite. For $k=1$ we have the previous result. For $k=m$, we obtain the result for the Hardy module over the ball and for $k=m+1$, we obtain the result for the Bergman module on the ball. Again, the existence of a dilation for one value of $k$ implies the existence for all smaller $k$, and hence always for the Drury-Arveson space if for any $k$.

A final observation concerns fractional exponents for which one obtain Besov-like spaces. Arguments such as those given in this section to yield additional relationships between these Hilbert modules and should be worth considering.

\section{Curvature Inequality}

Let $\mathcal R(k)$ be a scalar reproducing kernel Hilbert module for the n.n.d. kernel function $k$ over the polynomial ring $\mathbb C[\z]$ consisting of holomorphic functions on some bounded domain $\Omega$ in $\mathbb C^m$.  Assume that  $\mathbb C[\z] \subseteq \mathcal R(k)$ is dense in $\mathcal R(k)$.  It then follows (\cite{CD}) that the map $\w \mapsto \cap_{i=1}^m\ker (M_i - w_i)^*$ is anti-holomorphic from $\Omega$ to the projective space of $\clr(k)$ and defines an anti-holomorphic line bundle $E^*_{\mathcal R(k)}$ on  $\Omega$ via the map $\w \mapsto k(\cdot, \w)$, $\w \in \Omega$. The dual bundle $E_{\clr(k)}$ is a holomorphic bundle over $\Omega$. Moreover, $E_{\mathcal R(k)}$ is Hermitian, where the Hermitian metric on the line bundle is induced by the standard Hermitian inner product on $\mathcal R(k)$. In other words, with respect to the frame $\{s\}$, where $s(\bm{w}) = k(\cdot, \w)$, the Hermitian structure of $E_{\mathcal R(k)}$ defines a Hermitian form
  $$h (\w) = <s(\w), s(\w)> = \|k(\cdot, \w)\|^2, \quad \quad \w \in \Omega.$$ Then the canonical Chern connection on $E_{\mathcal R(k)}$ is given by $$\triangledown = \partial h. h^{-1},$$ with the curvature form

\begin{equation} \label{curvE} \mathcal K_{\mathcal R(k)}(\w) = - \frac{1}{2} \partial \bar{\partial} \mbox{log}\, h = - \frac{1}{2} \sum_{i,j=1}^m \frac{\partial}{\partial w_i} \frac{\partial}{\partial \bar{w}_j} \log k(\w,\w) \,dw_i \wedge d\bar{w}_j, \,\, \w \in \Omega.\end{equation}

We want to compare the curvatures of the bundles associated with a quotient module and the bundle for the isometric co-extension. First, we need to recall some results from complex geometry concerning curvatures of sub-bundles and quotient bundles (cf. \cite{GH}, pp. 78-79).

Let $E$ be a Hermitian holomorphic bundle over $\Omega \subseteq \mathbb{C}^m$ (possibly infinite rank) and $F$ be a holomorphic sub-bundle of $E$ such that the quotient $Q = E/F$ is also holomorphic. Let $\triangledown_E$ denote the Chern connection on $E$ and $\Theta_E$ the corresponding curvature form. There are two canonical connections that we can define on $F$ and the quotient bundle $Q$. The first ones are the Chern connections $\triangledown_F$ and $\triangledown_Q$ on $F$ and $Q$, respectively. To obtain the second connections, let $P$ denote the projection-valued bundle map of $E$ so that $P(\z)$ is the orthogonal projection of $E(\z)$ onto $F(\z)$. Then $$\triangledown_{PE} = P \triangledown_E P \quad \quad \mbox{and}\quad \quad \triangledown_{P^{\perp}E} = P^{\perp} \triangledown_E P^{\perp},$$ define connections on $F$ and $Q$, respectively, where $P^{\perp} = I - P$ and $Q$ is identified fiber wise with $P^{\perp} E$. The following result from complex geometry  relates the curvatures for these pairs of connections.

\begin{Theorem}
If $F$ is a holomorphic sub-bundle of the holomorphic bundle $E$ over $\Omega \subseteq \mathbb{C}^m$ such that $E/F$ is holomorphic, then the curvature functions for the connections $\triangledown_F, \, \triangledown_{PE}, \, \triangledown_Q$ and $\triangledown_{P^{\perp}E}$ satisfy $$\Theta_F (\w) \geq \Theta_{PE}(\w) \quad \quad \mbox{and}\quad \quad \Theta_Q(\w) \leq \Theta_{P^{\perp} E}(\w), \quad \quad \w \in \Omega.$$
\end{Theorem}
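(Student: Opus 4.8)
The plan is to reduce this to a local, pointwise statement about second fundamental forms of a short exact sequence of Hermitian holomorphic vector bundles, which is the classical Griffiths formula. The key observation is that since the Chern connection $\triangledown_E$ is compatible with both the metric and the holomorphic structure of $E$, when we decompose $E = F \oplus F^\perp$ fiberwise (this is a $C^\infty$, not holomorphic, splitting), the connection $\triangledown_E$ splits in block form as
\[
\triangledown_E = \begin{pmatrix} \triangledown_{PE} & -\sigma^* \\ \sigma & \triangledown_{P^\perp E} \end{pmatrix},
\]
where $\sigma \in \mathcal{A}^{1,0}(\mathrm{Hom}(F, F^\perp))$ is the second fundamental form of $F$ in $E$ (a $(1,0)$-form because $F$ is a holomorphic sub-bundle, so $P^\perp \triangledown_E P$ has no $(0,1)$-part), and $-\sigma^*$ is its adjoint, which is a $(0,1)$-form. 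First I would verify this block decomposition and identify $\triangledown_{PE} = P\triangledown_E P$ and $\triangledown_{P^\perp E} = P^\perp \triangledown_E P^\perp$ as Chern connections for the induced metrics on $F$ and $Q \cong P^\perp E$ respectively.

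Next I would compute the curvature $\Theta_E = \triangledown_E \circ \triangledown_E$ in this block form. Expanding the $2\times 2$ matrix product, the $(F,F)$-block of $\Theta_E$ is $\Theta_{PE} - \sigma^* \wedge \sigma$ and the $(F^\perp, F^\perp)$-block is $\Theta_{P^\perp E} - \sigma \wedge \sigma^*$. On the other hand, the genuine Chern curvatures satisfy $\Theta_F = P\Theta_E P|_F$ (this is a standard identity: restricting the ambient Chern connection to a holomorphic sub-bundle and then taking the $F$-component recovers the sub-bundle's Chern connection, and likewise for curvature after one more application) and similarly $\Theta_Q = P^\perp \Theta_E P^\perp|_Q$. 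Comparing, one gets the two Griffiths identities
\[
\Theta_F = \Theta_{PE} - \sigma^* \wedge \sigma, \qquad \Theta_Q = \Theta_{P^\perp E} + \sigma \wedge \sigma^*,
\]
where I have rewritten $-\sigma\wedge\sigma^*$ using the sign convention. Then the inequalities follow from the fact that $i\sigma^* \wedge \sigma$ defines a positive semidefinite Hermitian form (it is of the shape $\sum_j \eta_j^* \wedge \eta_j$ after expanding $\sigma$ in a local frame and pairing with tangent vectors) and similarly $i\sigma \wedge \sigma^* \geq 0$; hence $\Theta_F \geq \Theta_{PE}$ and $\Theta_Q \leq \Theta_{P^\perp E}$ in the sense of curvature forms evaluated on holomorphic tangent vectors.

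The main obstacle, and the step needing the most care, is the handling of the \emph{infinite-rank} case: the paper explicitly allows $E$ to have infinite rank (indeed $E_{\clr}\otimes\cle$ with $\cle$ infinite-dimensional is the relevant example). I would need to check that the second fundamental form $\sigma$ and its adjoint are well-defined bounded-operator-valued forms, that the block decomposition of the connection and the curvature computations remain valid termwise, and that the positivity of $i\sigma^*\wedge\sigma$ — which in finite rank is just a sum of rank-one positive forms — still holds as a (possibly unbounded but) non-negative Hermitian form on the tangent space. In practice the positivity is clean because for any holomorphic tangent vector $v$, the relevant quantity is $\|\sigma(v)\xi\|^2 \geq 0$ for $\xi$ in the fiber, so no real analytic subtlety arises beyond bookkeeping. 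A secondary point to state carefully is the sign/orientation convention relating the curvature \emph{form} inequality to the Hermitian-form inequality on $T^{1,0}\Omega$, matching the convention $\mathcal{K}_{\clr(k)} = -\tfrac12\partial\bar\partial\log h$ used in \eqref{curvE}; once that convention is fixed the two displayed inequalities are immediate from Griffiths' formula.
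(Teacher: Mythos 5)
Your proposal follows essentially the same route as the paper, which describes the proof only as ``a matrix calculation involving the off-diagonal entries of $\triangledown_E$'' (the second fundamental form and its dual, citing Griffiths--Harris) together with the remark that it extends to infinite rank; your block decomposition of $\triangledown_E$, the resulting identities $\Theta_F = \Theta_{PE} - \sigma^*\wedge\sigma$ and $\Theta_Q = \Theta_{P^{\perp}E} + \sigma\wedge\sigma^*$, and the sign-definiteness of $i\,\sigma^*\wedge\sigma$ are exactly that calculation. One small slip worth fixing: the intermediate claim $\Theta_F = P\Theta_E P|_F$ is false as stated --- it is the Chern \emph{connection} of $F$, not its curvature, that is recovered by compressing to the sub-bundle, and the two curvatures differ precisely by the $\sigma^*\wedge\sigma$ term --- but since your final displayed identities are the correct Griffiths formulas once $\Theta_{PE}$ is read as the compression $P\Theta_E P$ of the ambient curvature, the argument and the infinite-rank remarks go through.
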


The proof is essentially a matrix calculation involving the off-diagonal entries of $\triangledown_E$, one of which is the second fundamental form and the other its dual (cf. \cite{GH}). (Note in \cite{GH}, $E$ is finite rank but the proof extends to the more general case.)

We apply this result to Hilbert modules as follows.

\begin{Theorem}
Let $\clr$ be a quasi-free Hilbert module over $A(\Omega)$ for $\Omega \subseteq \mathbb{C}^m$ (or over $\mathbb{C}[\z]$) of multiplicity one and $\cls$ be a submodule of $\clr \otimes \cle$ for a Hilbert space $\cle$ such that the quotient module $\mathcal Q = (\clr \otimes \cle)/\cls$ is in $B_n(\Omega)$ for some $1 \leq n < \infty$. If $E_{\clr}$ and $E_{\mathcal Q}$ are the corresponding Hermitian holomorphic bundles over $\Omega$, then $$P^{\perp}(\w) ( \Theta_{E_{\clr}}(\w) \otimes I_{\cle}) P^{\perp}(\w) \geq \Theta_{\mathcal Q}(w), \quad \w \in \Omega.$$
\end{Theorem}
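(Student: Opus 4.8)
The plan is to reduce the assertion to the purely complex-geometric curvature comparison quoted just above, by exhibiting $E_{\mathcal Q}$ as a Hermitian holomorphic \emph{quotient} bundle of $E_{\clr}\otimes\cle$ equipped with the quotient metric. The device that makes this work is the kernel-function factorization of Theorem \ref{TH1}.

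First I would record the elementary fact that, since the reproducing kernel of $\clr\otimes\cle$ is $k(\z,\w)I_{\cle}$, the Hermitian holomorphic bundle $E_{\clr\otimes\cle}$ is isometrically $E_{\clr}\otimes\cle$ carrying the tensor-product metric (the metric $h_{\clr}(\w)=k(\w,\w)$ of the line bundle $E_{\clr}$ tensored with the constant metric on $\cle$). Consequently its Chern curvature is $\Theta_{E_{\clr}}(\w)\otimes I_{\cle}$, the second summand $I\otimes\Theta_{\cle}$ in the tensor-product curvature formula vanishing because $\cle$ carries the flat metric.

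Next, because $\mathcal Q=(\clr\otimes\cle)/\cls$ has an isometric co-extension to $\clr\otimes\cle$, Theorem \ref{TH1} (equivalently, localizing the short exact sequence as in its proof) provides a holomorphic $\pi_{\z}\in\clo(\Omega,\cll(\cle,l^2_n))$ with $K_{\mathcal Q}(\z,\w)=k(\z,\w)\pi_{\z}\pi_{\w}^{*}$; since $\mathcal Q\in B_n(\Omega)$ the matrix $K_{\mathcal Q}(\w,\w)=k(\w,\w)\pi_{\w}\pi_{\w}^{*}$ is positive definite, so each $\pi_{\w}$ is onto $l^2_n$ and each $\pi_{\w}^{*}$ is injective. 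Using the isometric co-extension $X:\mathcal Q\to\clr\otimes\cle$ of Theorem \ref{TH1}, which sends $K_{\mathcal Q}(\cdot,\w)\eta$ to $k(\cdot,\w)\otimes\pi_{\w}^{*}\eta$, I would identify the anti-holomorphic bundle $E^{*}_{\mathcal Q}$ (spanned by $\{K_{\mathcal Q}(\cdot,\w)\eta\}$) with the rank-$n$ sub-bundle $k(\cdot,\w)\otimes\pi_{\w}^{*}(l^2_n)$ of $E^{*}_{\clr\otimes\cle}=E^{*}_{\clr}\otimes\cle$; since $X$ is isometric this identification is isometric. Dualizing, $E_{\mathcal Q}$ is exactly a Hermitian holomorphic quotient bundle of $E_{\clr}\otimes\cle$: there is a short exact sequence $0\to F\to E_{\clr}\otimes\cle\to E_{\mathcal Q}\to 0$ of holomorphic bundles over $\Omega$, with $F$ the kernel of the induced bundle map, and under the fiberwise identification of $E_{\mathcal Q}(\w)$ with $F(\w)^{\perp}$ the quotient metric is the restriction of the metric of $E_{\clr}\otimes\cle$. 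Here $P(\w)$ denotes the orthogonal projection of the fiber $(E_{\clr}\otimes\cle)(\w)$ onto $F(\w)$ and $P^{\perp}(\w)=I-P(\w)$.

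Finally I would feed this exact sequence into the curvature comparison theorem quoted above in its ``quotient'' form, obtaining $\Theta_{\mathcal Q}(\w)=\Theta_{Q}(\w)\le\Theta_{P^{\perp}E}(\w)=P^{\perp}(\w)\,\Theta_{E_{\clr}\otimes\cle}(\w)\,P^{\perp}(\w)=P^{\perp}(\w)\bigl(\Theta_{E_{\clr}}(\w)\otimes I_{\cle}\bigr)P^{\perp}(\w)$, which is the claim. The genuinely delicate point is the third paragraph: one must verify not merely that $E_{\mathcal Q}$ is abstractly a quotient of $E_{\clr}\otimes\cle$ but that its reproducing-kernel (Chern) metric \emph{coincides} with the quotient metric --- this is precisely what the factorization $K_{\mathcal Q}=k\,\pi_{\z}\pi_{\w}^{*}$ together with the isometry of $X$ delivers --- and that $F$ is a bona fide holomorphic sub-bundle (fibers closed, complemented, holomorphically varying), which holds because $E_{\mathcal Q}$ and $E_{\clr}\otimes\cle$ are honest holomorphic bundles (the former since $\mathcal Q\in B_n(\Omega)$) and the bundle map between them is holomorphic; $F$ may well be of infinite rank, but the complex-geometry theorem was already noted to extend to that case.
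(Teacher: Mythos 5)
Your proposal is correct and follows essentially the same route as the paper, which simply applies the preceding curvature-comparison theorem with $E = E_{\clr}\otimes\cle$, $F = E_{\cls}$ and $Q = E_{\mathcal Q}$. The extra work you do in the third paragraph --- using the factorization $K_{\mathcal Q}=k\,\pi_{\z}\pi_{\w}^{*}$ and the isometry $X$ to check that the reproducing-kernel metric on $E_{\mathcal Q}$ really is the quotient metric --- is a detail the paper leaves implicit, not a different argument.
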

\NI\textsf{Proof.} The result follows from the previous theorem by setting $E = E_{\clr} \otimes \cle, \, F = E_\cls$ and $Q = E_\mathcal Q$. \qed

In particular, we have the following extremal property of the curvature functions.

\begin{Theorem}
A necessary condition for a Hilbert module $\clh$ in $B_n(\Omega)$ over $A(\Omega)$, $\Omega \subseteq \mathbb{C}^m$, to have $\clr \otimes \cle$ for some Hilbert space $\cle$ as an isometric co-extension is that $$ \Theta_{E_{\clr}}(\w) \otimes I_{\cle} \geq \Theta_{\clh}(w), \quad \w \in \Omega.$$
\end{Theorem}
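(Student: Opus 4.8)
The plan is to derive the conclusion directly from the immediately preceding theorem, the only additional input being that the compression by $P^{\perp}(\w)$ appearing there is harmless because $\clr$ has multiplicity one (as in that theorem), so that $E_{\clr}$ is a line bundle and $\Theta_{E_{\clr}}\otimes I_{\cle}$ is an identity-valued $(1,1)$-form. First I would unwind the hypothesis: as observed earlier in the paper, saying that $\clh$ has $\clr\otimes\cle$ as an isometric co-extension means that $\clh$ is module isomorphic to $\mathcal Q:=(\clr\otimes\cle)/\cls$ for some submodule $\cls$ of $\clr\otimes\cle$. Since $\clh\in B_n(\Omega)$ and a module isomorphism between Hilbert modules induces an equivalence of the associated Hermitian holomorphic bundles (cf. \cite{CD}), we get $\mathcal Q\in B_n(\Omega)$, and under the induced identification $E_{\mathcal Q}\cong E_{\clh}$ the curvatures agree: $\Theta_{\mathcal Q}(\w)=\Theta_{\clh}(\w)$ for $\w\in\Omega$.

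Next I would apply the previous theorem with $E=E_{\clr}\otimes\cle$, $F=E_{\cls}$ and $Q=E_{\mathcal Q}$, where $P(\w)$ is the orthogonal projection of $E(\w)$ onto $E_{\cls}(\w)$ and $E_{\mathcal Q}(\w)$ is identified fiberwise with $\mathrm{ran}\,P^{\perp}(\w)$, $P^{\perp}=I-P$. This yields
\[
P^{\perp}(\w)\big(\Theta_{E_{\clr}}(\w)\otimes I_{\cle}\big)P^{\perp}(\w)\ \geq\ \Theta_{\mathcal Q}(\w)=\Theta_{\clh}(\w),\qquad \w\in\Omega.
\]

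The key step is then to see that $P^{\perp}$ may be dropped. Because $\clr$ has multiplicity one, $E_{\clr}$ is a line bundle, so by (\ref{curvE}) its curvature form has scalar coefficients: $\Theta_{E_{\clr}}(\w)=\sum_{i,j=1}^{m}\mathcal K^{\clr}_{ij}(\w)\,dw_i\wedge d\bar w_j$ with $\mathcal K^{\clr}_{ij}(\w)\in\mathbb{C}$. Hence the coefficients of $\Theta_{E_{\clr}}(\w)\otimes I_{\cle}$ are $\mathcal K^{\clr}_{ij}(\w)\,I$, and for any projection $P^{\perp}(\w)$ we have $P^{\perp}(\w)\big(\mathcal K^{\clr}_{ij}(\w)I\big)P^{\perp}(\w)=\mathcal K^{\clr}_{ij}(\w)\,P^{\perp}(\w)$, which on $\mathrm{ran}\,P^{\perp}(\w)=E_{\mathcal Q}(\w)\cong E_{\clh}(\w)$ acts as the scalar operator $\mathcal K^{\clr}_{ij}(\w)\,I_{E_{\clh}(\w)}$. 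Therefore the left-hand side of the displayed inequality, read as a form-valued operator on $E_{\clh}(\w)$, is exactly $\big(\sum_{i,j}\mathcal K^{\clr}_{ij}(\w)\,dw_i\wedge d\bar w_j\big)I_{E_{\clh}(\w)}$, which is precisely what the symbol $\Theta_{E_{\clr}}(\w)\otimes I_{\cle}$ denotes in the statement; combining this with the displayed inequality gives $\Theta_{E_{\clr}}(\w)\otimes I_{\cle}\geq\Theta_{\clh}(\w)$ for all $\w\in\Omega$, as claimed.

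The hard part is conceptual rather than computational. A priori the two sides of the asserted inequality live on different spaces — $E_{\clr}\otimes\cle$ versus the $n$-dimensional fiber $E_{\clh}(\w)$ — and the whole substance of the argument is that passing from the $P^{\perp}$-compressed inequality of the preceding theorem to the clean one is legitimate only because multiplicity one makes $\Theta_{E_{\clr}}$ a scalar $(1,1)$-form, so that compressing it by $P^{\perp}(\w)$ merely rescales the identity on the quotient fiber. For $\clr$ of higher multiplicity this step would genuinely fail and only the weaker, $P^{\perp}$-compressed inequality would remain.
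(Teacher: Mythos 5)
Your argument is correct and is exactly the route the paper intends: the theorem is stated there as an immediate consequence (``In particular'') of the preceding compressed-curvature inequality, and your key observation --- that multiplicity one makes $\Theta_{E_{\clr}}$ a scalar $(1,1)$-form, so that $P^{\perp}(\w)\big(\Theta_{E_{\clr}}(\w)\otimes I_{\cle}\big)P^{\perp}(\w)$ acts on the quotient fiber as $\Theta_{E_{\clr}}(\w)\otimes I$ itself --- is precisely the implicit step being used. Your identification of $\Theta_{\clh}$ with $\Theta_{\mathcal Q}$ through the module isomorphism is likewise the standard Cowen--Douglas fact the paper relies on, so nothing is missing.
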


The converse of this result is false, but it is likely valid for some stronger notion of positivity. We plan to take up this matter in the future.

Recalling Corollary \ref{DAkernel}, we see that any contractive reproducing kernel Hilbert module $\clm(K)$ over the polynomial algebra $\mathbb{C}[\z]$ satisfies the inequality $\mathbf K_{H^2_m} - \mathbf K_{\clm(K)} \geq 0$. This is a generalization of the curvature inequality for the disk from \cite{GM}, see also \cite{U, GM-NSN}.

The above inequality shows in view of Corollary \ref{main} that the module $H^2_m$ is an extremal element in the set of contractive Hilbert modules over the algebra $\mathbb{C}[\z]$.  Similarly, for the polydisk $\mathbb D^m$, the Hardy module is an extremal element in the set of those modules over the algebra $\mathcal A(\mathbb D^m)$ which admit a co-extension to the Hardy space $H^2(\mathbb D^m) \otimes \mathcal  E$.

\vspace{0.2in}


\begin{thebibliography}{99}

\bibitem{Agl}
J.\ Agler, {\em The Arveson extension theorem and coanalytic models}, Integ. Equations Oper. Theory, 5 (1982), 608 -- 631.

\bibitem{AEM}
C. G. Ambrozie,  M. Englis and V. M\"{u}ller, {\em Operator tuples and analytic models over general domains in $\Bbb C^n$}, J. Operator Theory,  47  (2002), 287 -- 302.

\bibitem{A}
T. Ando, {\em On a pair of commutative contractions}, Acta Sci. Math., 24 (1963), pp. 88–90.

\bibitem{AE}
J.\ Arazy, M.\ Englis {\em Analytic models for commuting operator tuples on bounded symmetric domains}, Trans. Amer. Math. Soc.,  355  (2003), 837 -- 864.

\bibitem{aron}
N.~Aronszajn, \emph{{Theory of reproducing kernels}}, Trans. Amer.Math. Soc.
  \textbf{68} (1950), 337 -- 404.

\bibitem{Ar}
W. B. Arveson, {\em Subalgebras of $C^*$-algebras. III. Multivariable operator theory}, Acta Math. 181 (1998), no. 2, 159--228. \MR2000e:47013.

\bibitem{Arv} W.\ Arveson, {\em Subalgebras of $C^{*}$-algebras III: Multivariable operator theory}, Acta Math., 181 (1998), 159 -- 228.

\bibitem{Ath} A.\ Athavale, {\em Holomorphic kernels and commuting operators}, Trans. Amer. Math. Soc. 304 (1987), 101 -- 110.

\bibitem{BDF}
H.\ Bercovici, R. G.\ Douglas and C.\ Foias, {\em On the classification of multi-isometries},  Acta Sci. Math. (Szeged)  72  (2006),  no. 3-4, 639--661. \MR2008d:47013.

\bibitem{CD}
M. Cowen and R. Douglas, {\em Complex geometry and operator theory}, Acta Math. 141 (1978), pp. 187–261. \MR56 9302.

\bibitem{CS}
R.~ E.~ Curto and N.~ Salinas, {\em Generalized Bergman kernels and the Cowen-Douglas theory}, Amer. J. Math., 106 (1984), 447 -- 488.

\bibitem{DF}
R. G. Douglas and C. Foias, {\em Uniqueness of multi-variate canonical models},  Acta Sci. Math. (Szeged)  57  (1993),  no. 1-4, 79--81.

\bibitem{rgdgm}
R. G. Douglas and G. Misra, \emph{Geometric invariants for resolutions of {Hilbert} modules}, Operator Theory: Advances and Applications, Birkhauser, 1993, 83 -- 112.

\bibitem{eqhm}
\bysame, \emph{Quasi-free resolutions of Hilbert modules}, J. Int. Eqns. Operator Th., 47 (2003), 435 -- 456.

\bibitem{DMV}R.\ G.\ Douglas, G.\ Misra and Varughese, {\em On quotient modules - the case of arbitrary multiplicity}, J. Func. Anal. 174 (2000), 364  --  398.


\bibitem{DP}
R.\ G.\ Douglas and V.\ I.\ Paulsen,  {\em Hilbert Modules over Function Algebras}, Research Notes in Mathematics Series, 47, Longman, Harlow, 1989.

\bibitem{Gr}
P. Griffiths, {\em Hermitian differential geometry, Chern classes and positive vector bundles}, in Global Analysis, papers in Honor of K Kodaira, University of Tokyo.

\bibitem{GH}
P. Griffiths and J. Harris, {\em Principles of algebraic geometry}, Wiley Classics Library. John Wiley \& Sons, Inc., New York, 1994.


\bibitem{GM}
G. Misra, {\em Curvature inequalities and extremal properties of bundle shifts},  J. Operator Theory,  11  (1984), 305 -- 317.

\bibitem{GM-NSN}
G. Misra and N. S. N. Sastry, {\em Bounded modules, extremal problems, and a curvature inequality}, J. Funct. Anal. 88 (1990),  118 -- 134.

\bibitem{Na-Fo}
B.~Sz-Nagy and C.~Foias, \emph{{Harmonic analysis of operators on Hilbert
  space}}, North Holland, 1970.

\bibitem{vN}
J. von Neumann, {\em Eine Spektraltheorie für allgemeine Operatoren eines unit\"{a}ren Raumes},  Math. Nachr., 4 (1951), 258 -- 281.

\bibitem{P}
S. Parrott, {\em Unitary dilations for commuting contractions},  Pacific J. Math.,  34  (1970),  481 -- 490.

\bibitem{U}
M. Uchiyama, {\em Curvatures and similarity of operators with holomorphic eigenvectors}, Trans. Amer. Math. Soc.,  319  (1990), 405 -- 415.

\bibitem{W}
R.~O.~Wells, {\em Differential analysis on complex manifolds}, Graduate Texts in Mathematics, Springer-Verlag,
New York-Berlin, \textbf{65}, 1980.

\end{thebibliography}
\end{document}